\theoremstyle{plain}
\newtheorem{theorem}{Theorem}[section]
\theoremstyle{remark}
\newtheorem{remark}[theorem]{Remark}
\newtheorem{facts}[theorem]{Facts}
\newtheorem{example}[theorem]{Example}
\newtheorem{problem}[theorem]{\textbf{\em Problem}}
\theoremstyle{plain}
\newtheorem{corollary}[theorem]{Corollary}
\newtheorem{lemma}[theorem]{Lemma}
\newtheorem{proposition}[theorem]{Proposition}
\numberwithin{equation}{section}
\def\N{{\mathbb N}}
\def\Z{{\mathbb Z}}
\def\R{{\mathbb R}}
\def\C{{\mathbb C}}
\newcommand{\E}{{\mathbb E}}
\renewcommand{\P}{{\mathbb P}}
\newcommand{\F}{{\mathcal F}}
\newcommand{\g}{\gamma}
\newcommand{\eps}{\varepsilon}
\renewcommand{\O}{\Omega}
\newcommand{\calL}{{\mathcal L}}
\newcommand{\one}{{{\bf 1}}}
\newcommand{\lb}{\langle}
\newcommand{\rb}{\rangle}
\begin{document}

\author{Mark Veraar}
\address{Delft Institute of Applied Mathematics\\
Delft University of Technology \\ P.O. Box 5031\\ 2600 GA Delft\\The
Netherlands} \email{M.C.Veraar@tudelft.nl}

\author{Lutz Weis}
\address{Institut f\"ur Analysis \\
Karlsruhe Institute of Technology\\
D-76128  Karls\-ruhe\\Germany}
\email{Lutz.Weis@kit.edu}

\thanks{The first author
is supported by the VIDI subsidy 639.032.427 of the Netherlands Organisation for Scientific Research (NWO)}

\date\today

\title[Estimates for vector-valued holomorphic functions]
{Estimates for vector-valued holomorphic functions and Littlewood-Paley-Stein theory}

\begin{abstract}
In this paper we consider generalized square function norms of holomorphic functions with values in a Banach space.
One of the main results is a characterization of embeddings of the form
\[L^p(X)\subseteq \gamma(X) \subseteq L^q(X),\]
in terms of the type $p$ and cotype $q$ for the Banach space $X$. As an application we prove $L^p$-estimates for vector-valued Littlewood-Paley-Stein $g$-functions and derive an embedding result for real and complex interpolation spaces under type and cotype conditions.
\end{abstract}

\keywords{Vector-valued holomorphic functions, type and cotype, Littlewood-Paley $g$-function, Fourier type, embedding, real interpolation, complex interpolation, functional calculus}

\subjclass[2010]{Primary: 46B09; Secondary: 42B25, 46B70, 46E40, 46B20, 47D07}


\maketitle

\section{Introduction}
For a space $X = L^r$ with $r\in (1, \infty)$ and a function $f:\Omega\to X$, the usefulness of square functions
\begin{equation}\label{eq:squareintro}
\Big\|\Big(\int_{\Omega} |f(\omega)|^2 \, d\mu(\omega) \Big)^{\frac12}\Big\|_X
\end{equation}
and their estimates, are well-known in
\begin{itemize}
\item harmonic analysis (e.g.\ in the context of Littlewood-Paley theory and $g$-functions \cite{Stein:topics}),
\item in the theory of the holomorphic functional calculus (see e.g. \cite{CDMY96, LeM04})
\item in stochastic analysis (e.g.\ Burkholder-Davis-Gundy inequalities for martingales and stochastic integrals \cite{Kal}).
\end{itemize}
Motivated by the $\ell$-norm in the geometry of Banach spaces (see e.g.\ \cite{FigTom,PisConv}), the paper \cite{KalW04} in an earlier version of 2002, introduced $\gamma$-norms as an extension of the square functions \eqref{eq:squareintro} to the Banach space setting. Since then $\gamma$-norms, similarly as their classical counterpart \eqref{eq:squareintro}, have been very useful in harmonic analysis, spectral theory and stochastic analysis of Banach space valued function. It makes it possible to extend Hilbert space results to the Banach space setting (see e.g.\ \cite{AFM,BCCFRM12,betancor2012umd,HaHa,HaakKunst06,Hyt07,HNP:tent,HytWeBMO,KaiWei08,LeM14,Nee10,NVW07a,NeeWei05a}).

In this paper we consider square functions of  Banach space valued holomorphic functions
which arise naturally in several areas of analysis. In particular, in the evolution equation approach to partial differential equations one typically needs resolvents $\lambda\mapsto  (\lambda-A)^{-1}$ on a sector or strip or holomorphic semigroups $z\mapsto e^{-z A}$ on a sector. Here $A$ is an (unbounded) operator on a Banach space $X$. General references on vector-valued holomorphic functions, functional calculus, and their uses in evolution equations are \cite{Am,ABHN,Haase:2,KunWeis04}.

It is easy to see that on bounded domains for any integer $k\geq 0$, $\sum_{j=1}^k \|f^{(j)}\|_{\gamma}$ is equivalent to any of the Sobolev norms $\|f\|_{W^{\ell,p}}$ for any integer $\ell\geq 0$ and $p\in [1, \infty]$ up to a slight deformation of the domain (see Lemma \ref{lem:analyticbddgamma}). However, for unbounded domains such an equivalence of $\gamma$-norms and Sobolev norms fails as can already be seen in the scalar case. We will show certain embedding results still hold when the domain is a strip or a sector (see Lemmas \ref{lem:analyticgammadiscrte} and \ref{lem:analyticYlemma}). Our main result is then a characterization of an embedding result on a strip or sector $S$ of the form $L^p(X) \hookrightarrow \gamma(X) \hookrightarrow L^q(X)$ in terms of the (Rademacher) type $p$ and cotype $q$ of $X$ (see Theorems \ref{thm:gammaanalytictype} and \ref{thm:gammaanalyticcotype}).

As an application of the embedding result for holomorphic functions on the sector, we consider vector-valued extensions of $L^p$-estimates for Littlewood-Paley-Stein $g$-functions  as introduced in \cite{Stein:topics} in the scalar case. First of all we prove $L^p$-estimates for $t\mapsto f(tA)x$, where $f$ is a bounded holomorphic function on a sector and $A$ has a bounded $H^\infty$-calculus under type and cotype assumptions. Secondly, we consider estimates for the tensor extension of diffusion semigroups on $L^r(\Omega)$. Previous results in this direction have been obtained in \cite{Xu98} for the Poisson semigroup and later in \cite{MTX06} for more general diffusion semigroups. Our approach differs from the latter works and is applicable to a larger class of diffusion operators and $g$-functions.

As a further application we obtain an embedding result for real and complex interpolation spaces in the case $X_1\hookrightarrow X_0$, $X_0$ has type $p$ and cotype $q$ and $X_1 = D(A)$, where $A$ is a sectorial operator with a bounded holomorphic calculus: for all $\theta\in (0,1)$
\begin{align*}
(X_0, X_1)_{\theta,p} \hookrightarrow [X_0, X_1]_{\theta} \hookrightarrow (X_0, X_1)_{\theta,q},
\end{align*}
Here $(\cdot, \cdot)_{\theta, r}$ denotes the real interpolation space with parameter $r$ and $[\cdot, \cdot]_{\theta}$ the complex interpolation space. Of course this embedding always holds for $p=1$ and $q=\infty$. In \cite{Peetre69} this was improved to exponents $p\in (1, 2]$ and $q\in [2, \infty)$ under Fourier type conditions on the Banach space and this leads to a different result than ours.

\medskip

\noindent \textbf{Acknowledgement} The authors thank Nick Lindemulder and the anonymous referee for helpful comments and careful reading.

\section{Preliminaries}

For details on $\gamma$-norms and $\gamma$-radonifying operators we refer to the survey \cite{Nee10}. Below we repeat some of the definitions and properties.

Let $(S,\Sigma,\mu)$ be a measure space and (for convenience) we assume that $L^2(S)$ is separable and let $(h_n)_{n\geq 1}$ be an orthonormal basis for $L^2(S)$. Let $X$ be a (complex) Banach space. Let $(\Omega,\mathcal{A}, \P)$ be a probability space and let $(\gamma_n)_{n\geq 1}$ be a sequence of independent (complex) standard normal random variables with values in $\R$.  We will identity a function $f:S\to X$
and the operator $I_f:L^2(S)\to X$ given by
\begin{equation}\label{eq:Tassf}
I_f h = \int_S f(s) h(s) \, ds
\end{equation}
whenever this integral makes sense as a Bochner or Pettis integral (see \cite{DieUhl}). In particular, the integral
makes sense if $f$ is strongly measurable and for all $x^*\in X^*$, $s\mapsto \lb f(s), x^*\rb$ is in $L^2(S)$. Note that this identification is of a similar nature as the one usually made between functions and distributions.

For a general $T:L^2(S)\to X$ we let
\begin{equation}\label{eq:Tseries}
\|T\|_{\gamma(L^2(S),X)} = \Big\|\sum_{n\geq 1} \gamma_n T h_n\Big\|_{L^2(\Omega;X)}
\end{equation}
whenever this series converges in $L^2(\Omega;X)$. One can check this definition does not depend on the choice of the orthonormal basis $(h_n)_{n\geq 1}$. Moreover,
\[\|T\|_{\calL(L^2(S),X)}\leq \|T\|_{\gamma(L^2(S;X))}.\]
The space of all $T$ for which the series in \eqref{eq:Tseries} converges in $L^2(\Omega;X)$ is denoted by $\gamma(L^2(S),X)$. This can be shown to be a Banach space again. For $f:S\to X$ as above we let $\|f\|_{\gamma(S;X)} =  \|I_f\|_{\gamma(L^2(S),X)}$, where $I_f$ is as in \eqref{eq:Tassf}.
In particular, for $f$ of the form $f(s) = \sum_{n=1}^N h_n(s) x_n$ we find
\[\|f\|_{\gamma(S;X)} = \Big\|\sum_{n=1}^N \gamma_n x_n\Big\|_{L^2(\Omega;X)}.\]
In particular, $\|s\mapsto h(s)x\|_{\gamma(S;X)} = \|h\|_{L^2(S)} \|x\|$ whenever $h\in L^2(S)$ and $x\in X$.

The operators of $\gamma(L^2(S),X)$ satisfy the so-called ideal property. As a consequence of this one can extend
many operations on $L^2(S)$ to $\gamma(L^2(S),X)$.

\begin{facts}
The following properties will be used frequently:
\begin{enumerate}[(a)]
\item For $f\in \g(S;X)$ and $x^*\in X^*$, $\|\lb f, x^*\rb\|_{L^2(S)}\leq \|f\|_{\gamma(S;X)} \|x^*\|$.
\item For $f\in \g(S;X)$ and $g\in L^\infty(S)$, $fg\in \g(S;X)$ and
\[\|g f\|_{\gamma(S;X)}\leq \|g\|_{L^\infty(S)} \|f\|_{\gamma(S;X)}.\]
\item If $f\in \gamma(S;X)$ and $S_0\subset S$, then $\|f\|_{\gamma(S_0;X)} \leq \|f\|_{\gamma(S;X)}$ and if moreover $f$ is supported in $S_0$, then $\|f\|_{\gamma(S_0;X)} = \|f\|_{\gamma(S;X)}$.
\item For $f\in \gamma(\R;X)$ and $h\in \R$, one has $\|x\mapsto f(x+h)\|_{\gamma(\R;X)} = \|f\|_{\gamma(\R;X)}$.
\item For $f\in \gamma(\R;X)$ and $a>0$, one has $\|x\mapsto f(ax)\|_{\gamma(\R;X)} = a^{-1/2} \|f\|_{\gamma(\R;X)}$.
\item The Fourier transform
\[\hat{f}(\xi) = \F(f)(\xi) = \int_{\R} e^{-2\pi i x \xi} f(x) \, dx, \ \ \ \xi\in \R\]
satisfies $\|\hat{f}\|_{\gamma(\R;X)} = \|f\|_{\gamma(\R;X)}$.
\item For $g\in L^1(\R)$ and $f\in \gamma(\R;X)$, $g*f\in \gamma(\R;X)$ and
\[\|g*f\|_{\gamma(\R;X)}\leq \|g\|_{L^1(\R)} \|f\|_{\gamma(\R;X)}.\]
\end{enumerate}
\end{facts}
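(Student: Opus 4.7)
The plan is to derive every item from two elementary principles about the operator identification $I_f\in\gamma(L^2(S),X)$. The first is the \emph{right ideal property}: for $T\in\gamma(L^2(S),X)$ and $U\in\calL(L^2(S'),L^2(S))$ one has $T\circ U\in\gamma(L^2(S'),X)$ with $\|T\circ U\|_{\gamma}\leq\|U\|\,\|T\|_{\gamma}$, which is a standard consequence of the series definition \eqref{eq:Tseries} combined with Gaussian domination. The sharpening I would exploit is that equality holds whenever $U$ is unitary: this is immediate from the basis-independence of \eqref{eq:Tseries} together with the fact that a unitary image of an orthonormal basis is again orthonormal. The second principle is the contractive embedding $\calL(L^2(S),X)\hookrightarrow\gamma(L^2(S),X)$ already recorded above.

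For (a), I would identify the adjoint $I_f^{*}x^{*}$ with $\langle f(\cdot),x^{*}\rangle\in L^2(S)$ and apply the contractive embedding to $I_f^{*}$, together with $\|I_f^*\|=\|I_f\|$. For (b), pointwise multiplication $M_g$ is bounded on $L^2(S)$ with operator norm $\|g\|_{L^\infty(S)}$ and $I_{gf}=I_f\circ M_g$, so the right ideal property delivers the bound. For (c), the extension-by-zero map $\iota:L^2(S_0)\to L^2(S)$ is an isometry whose adjoint is the restriction $R:L^2(S)\to L^2(S_0)$; then $I_{f|_{S_0}}=I_f\circ\iota$ gives the first inequality, and for $f$ supported in $S_0$ one also has $I_f=I_{f|_{S_0}}\circ R$, producing equality from the ideal bound applied in both directions.

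For (d), (e), (f) I would realize each transformation as pre-composition with an operator on $L^2(\R)$: translation $\tau_h$ is unitary, the Fourier transform is unitary by Plancherel, and the dilation $D_a\phi=\phi(\cdot/a)$ decomposes as $D_a=a^{1/2}V$ for the unitary $V\phi=a^{-1/2}\phi(\cdot/a)$. Combined with the change-of-variables prefactor $a^{-1}$ in the identity $I_{f(a\cdot)}=a^{-1}I_f\circ D_a$, this yields the factor $a^{-1/2}$ in (e), while (d) and (f) give equalities directly by the unitary case of the ideal principle. For (g) I would write the convolution as a Bochner integral $g*f=\int_{\R}g(h)\,\tau_h f\,dh$ with values in $\gamma(\R;X)$ and combine the continuous triangle inequality with the translation invariance (d) to obtain $\|g*f\|_\gamma\leq\int_\R|g(h)|\,\|f\|_\gamma\,dh$.

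The one step requiring some real care is (g): I must justify that $h\mapsto g(h)\,\tau_h f$ is strongly measurable and absolutely Bochner integrable with values in $\gamma(\R;X)$, and that the resulting Bochner integral indeed represents $g*f$ there. I would first verify the identity for $f$ of the simple form $\sum_{n=1}^N h_n\otimes x_n$ with $h_n$ continuous and compactly supported, where continuity of $h\mapsto\tau_h f$ in $L^2$ and hence in $\gamma$ is classical, and then extend to general $f\in\gamma(\R;X)$ by a density argument that uses the isometric translation invariance from (d). Absolute integrability then reduces immediately to $\|g\|_1\|f\|_\gamma$, and all remaining items are one-line consequences of the two principles above.
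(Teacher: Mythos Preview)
The paper does not actually give a proof of these Facts; it states them as consequences of the ideal property and refers to the survey \cite{Nee10}. Your approach via the right ideal property is therefore exactly the intended one, and your arguments for (b)--(f) are correct and clean.

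Two points. First, your ``second principle'' is stated backwards: the contractive embedding is $\gamma(L^2(S),X)\hookrightarrow\calL(L^2(S),X)$, i.e.\ $\|T\|_{\calL}\le\|T\|_{\gamma}$, as recorded in the paper just above the Facts. (The reverse inclusion is false in infinite dimensions.) Your argument for (a) actually uses the correct direction --- $\|I_f^{*}x^{*}\|_{L^2}\le\|I_f\|_{\calL}\,\|x^{*}\|\le\|I_f\|_{\gamma}\,\|x^{*}\|$ --- so only the wording needs fixing.

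Second, for (g) the Bochner-integral route works but is heavier than necessary. It is more in line with the rest of your proof (and with the paper's one-sentence justification via the ideal property) to observe that $I_{g*f}=I_f\circ C_{\tilde g}$, where $C_{\tilde g}\phi=\tilde g*\phi$ with $\tilde g(y)=g(-y)$; by Young's inequality $\|C_{\tilde g}\|_{\calL(L^2)}\le\|g\|_{L^1}$, and the right ideal property gives the claim immediately, with no measurability or density argument needed.
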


Let $I \subset \R$ be a finite or infinite interval. If $f\in \gamma(I;X)$ is such that for every $j\leq k$, the $j$-th derivatives exist in the distributional sense and $f^{(j)}\in \gamma(I;X)$, then we write $f\in \gamma^{k}(I;X)$ and let
\[\|f\|_{\gamma^{k}(I;X)} = \sum_{j=0}^{k}\|f^{(j)}\|_{\gamma(I;X)}.\]

\section{General results for holomorphic functions and $\gamma$-norms}

We start with a lemma for bounded domains.
\begin{lemma}\label{lem:analyticbddgamma}
Let $D\subset \C$ be open and assume $f:D\to X$ is holomorphic. Let $D_1, D_2, D_3\subseteq D$ be open and bounded and such that $\overline{D_1}\subseteq D_2$ and $\overline{D_2}\subseteq D_3$. Then for all $p\in [1, \infty]$ and all integers $k, \ell\geq 0$, the following estimates hold:
\begin{equation}\label{eq:estfWgamma}
\|f\|_{W^{\ell,p}(D_1;X)} \lesssim \|f\|_{\gamma^k(D_2;X)} \lesssim \|f\|_{W^{\ell,p}(D_3;X)}.
\end{equation}
\end{lemma}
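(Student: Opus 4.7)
The plan is to reduce both inequalities to pointwise estimates, exploiting the freedom to insert intermediate domains between the nested $D_1,D_2,D_3$ together with the mean value property and Cauchy's derivative estimates for holomorphic functions.

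For the left inequality, fix $r>0$ small enough that $B(z,r)\subset D_2$ for every $z\in D_1$ (possible since $\overline{D_1}\subset D_2$). The mean value property gives $f(z) = I_f(\one_{B(z,r)}/(\pi r^2))$, so the trivial bound $\|I_f\|_{\calL(L^2(D_2),X)}\leq \|f\|_{\gamma(D_2;X)}$ yields
\[\|f(z)\|_X \leq (\pi r^2)^{-1/2}\|f\|_{\gamma(D_2;X)}.\]
Hence $\|f\|_{L^\infty(D_1;X)}\lesssim \|f\|_{\gamma(D_2;X)}$. Applying the same reasoning to $f^{(j)}$ on a slightly enlarged intermediate domain $D_1\subset D_1'\subset \overline{D_1'}\subset D_2$, combined with Cauchy's derivative inequalities on $D_1'$, gives $\|f^{(j)}\|_{L^\infty(D_1;X)}\lesssim \|f\|_{\gamma(D_2;X)}\leq \|f\|_{\gamma^k(D_2;X)}$ for $0\leq j\leq \ell$. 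Since $D_1$ is bounded, the first inequality follows from $\|f\|_{W^{\ell,p}(D_1;X)}\leq |D_1|^{1/p}\sum_{j=0}^{\ell}\|f^{(j)}\|_{L^\infty(D_1;X)}$.

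For the right inequality, pick $D_2'$ with $\overline{D_2}\subset D_2'$ and $\overline{D_2'}\subset D_3$. The mean value property applied in balls of fixed radius inside $D_3$, followed by Cauchy's derivative inequalities, yields for each $0\leq j\leq k$,
\[\|f^{(j)}\|_{L^\infty(D_2';X)}\lesssim \|f\|_{L^1(D_3;X)}\leq |D_3|^{1-1/p}\|f\|_{W^{\ell,p}(D_3;X)}.\]
The remaining task is: for any holomorphic $g:D_2'\to X$, to bound $\|g\|_{\gamma(D_2;X)}\lesssim \|g\|_{L^\infty(D_2';X)}$, and apply this to $g=f^{(j)}$ for $0\leq j\leq k$. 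Since $\overline{D_2}$ is compact in $D_2'$, I cover it by finitely many open disks $B(z_i,r_i)$, $i=1,\dots,N$, with $B(z_i,2r_i)\subset D_2'$, and split $D_2$ into disjoint Borel pieces $V_i\subset B(z_i,r_i)$ with $\bigcup_i V_i = D_2$. By the triangle inequality in $\gamma$ together with Facts (b) and (c),
\[\|g\|_{\gamma(D_2;X)}\leq \sum_{i=1}^N \|g\one_{V_i}\|_{\gamma(D_2;X)} = \sum_{i=1}^N \|g\|_{\gamma(V_i;X)} \leq \sum_{i=1}^N \|g\|_{\gamma(B(z_i,r_i);X)}.\]
On each disk the Taylor expansion at $z_i$ writes $I_g = \sum_{n\geq 0}\phi_n^{(i)}\otimes a_n^{(i)}$ with $\phi_n^{(i)}(z)=(z-z_i)^n$ and, by Cauchy's coefficient estimates, $\|a_n^{(i)}\|_X\leq (2r_i)^{-n}\|g\|_{L^\infty(B(z_i,2r_i);X)}$. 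Since each rank-one term has $\gamma$-norm exactly $\|\phi_n^{(i)}\|_{L^2(B(z_i,r_i))}\|a_n^{(i)}\|_X\lesssim r_i^{n+1}\|a_n^{(i)}\|_X$, the resulting geometric series is dominated by $\|g\|_{L^\infty(B(z_i,2r_i);X)}\leq \|g\|_{L^\infty(D_2';X)}$, and summation over $i$ concludes.

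The main technical step is this last $L^\infty\to \gamma$ conversion: for general bounded $X$-valued functions no such estimate holds, and the argument relies essentially on holomorphy, which lets me decompose $I_g$ into a rapidly convergent series of rank-one operators whose $\gamma$-norms can be computed exactly.
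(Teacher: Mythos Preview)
Your proof is correct and follows essentially the same approach as the paper: both inequalities are reduced to disks via a finite covering, the $\gamma\to W^{\ell,p}$ direction uses the mean value/Cauchy representation $f(z)=I_f(h_z)$ together with $\|I_f\|_{\calL}\le\|I_f\|_\gamma$ (the paper phrases this via the dual pairing and Fact~(a)), and the $W^{\ell,p}\to\gamma$ direction uses the Taylor expansion as an absolutely convergent series of rank-one operators with Cauchy coefficient bounds. The only cosmetic difference is that you insert an explicit intermediate $L^\infty$ step, whereas the paper bounds the Taylor coefficients directly by $\|f\|_{L^1(D_3;X)}$ through an integration of Cauchy's formula over an annulus.
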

The constants in \eqref{eq:estfWgamma} are independent of $f$ and we will not explicitly write this in the sequel.

\begin{proof}
To prove the second estimate in \eqref{eq:estfWgamma} it suffices to take $\ell=0$ and $p=1$. Note that we can cover $D_2$ by finitely many balls contained in $D_3$. By a dilation and translation argument, we may assume that $D_2 = \{z: |z|<1\}$ and $D_3 = \{z: |z|<1+2\varepsilon\}$ for some $\varepsilon>0$.

By Cauchy's formula we can write
\[f^{(n)}(a) = \frac{n!}{2\pi i} \oint_{\{|z| = t\}} \frac{f(z)}{(z-a)^{n+1}}\, dz, \ \ \ |a|<1+\varepsilon,\]
where $t\in [1+\varepsilon,1+2\varepsilon]$ is fixed. Therefore,
\[\|f^{(n)}(0)\| \leq \frac{n!}{2\pi t^{n+1}} \oint_{\{|z| = t\}} \|f(z)\|\, |dz|.\]
Integrating over $t\in [1+\varepsilon, 1+2\varepsilon]$, we find
\begin{equation}\label{eq:estfnaccent}
\|f^{(n)}(0)\| \leq C_{\varepsilon} \frac{n!}{(1+\varepsilon)^n} \|f\|_{L^1(D_3)}.
\end{equation}

Writing $f(z) = \sum_{n=0}^\infty \frac{z^n}{n!} f^{(n)}(0)$, by the triangle inequality and \eqref{eq:estfnaccent}  we find
\begin{align*}
\|f^{(k)}\|_{\gamma(D_2;X)} & \leq  \sum_{n=k}^\infty \frac{\|z^{n-k}\|_{L^2(D_2)}}{(n-k)!} \|f^{(n)}(0)\| \\ & \leq
C_{\varepsilon} \|f\|_{L^1(D_3)}\sum_{n=k}^\infty \frac{n!}{(n-k)!} (1+\varepsilon)^{-n}
= C_{\varepsilon,k} \|f\|_{L^1(D_3)}.
\end{align*}

To prove the first estimate in \eqref{eq:estfWgamma} it suffices to consider
$D_1 = \{z: |z|<1\}$ and $D_2 = \{z: |z|<1+2\varepsilon\}$ for some $\varepsilon>0$. Let $A_{\varepsilon} = \{1+\varepsilon<|z|<1+2\varepsilon\}$.
Using Cauchy's formula again we find that for all $|a|\leq 1$ and all $x^*\in X^*$,
\begin{align*}
\varepsilon |\lb f^{(k)}(a), x^*\rb| & = \frac{k!}{2\pi} \int_{1+\varepsilon}^{1+2\varepsilon} \oint_{\{|z| = t\}} \frac{|\lb f(z), x^*\rb|}{|z-a|^{k+1}}\, |dz| \, dt
\\ & \leq \frac{k! C_{\varepsilon}}{2\pi} \|\lb f, x^*\rb\|_{L^2(D_2)}
\leq C_{k,\varepsilon} \|f\|_{\gamma(D_2)} \|x^*\|.
\end{align*}
Now the result follows by taking the supremum over all $\|x^*\|\leq 1$.
\end{proof}

Let the strip $S_{\alpha}$ be given by $S_{\alpha} = \{z\in \C: |\text{Im}(z)|<\alpha\}$.

\begin{lemma}\label{lem:analyticgammadiscrte}
Let $f:S_{\alpha}\to X$ be holomorphic and let $0\leq a<b<\alpha$.
Let
\[\gamma(f) = \Big\|\sum_{n\in \Z} \gamma_n f(n)\Big\|_{L^2(\Omega;X)}.\]
Then  one has
\begin{align}
\label{eq:estgamma1}\sum_{j\in \{-1,1\}} \sup_{s\in [0,1]} & \Big(\gamma(f(\cdot+s+ ij a)) + \gamma(f'(\cdot+s+ ij a))\Big)
 \lesssim \|f\|_{\gamma(S_b;X)}
\\ & \label{eq:estgamma2} \leq \sum_{j\in \{-1,1\}} \|f(\cdot+ijb)\|_{\gamma(\R;X)}
\\ & \label{eq:estgamma3} \leq \sum_{j\in \{-1,1\}} \int_{0}^1 \gamma(f'(\cdot+s+ijb)) \, ds
\ \  + \ \ \gamma(f(\cdot+ ijb)),
\end{align}
where the convergence of each of the right-hand sides implies the converges of the previous term.
\end{lemma}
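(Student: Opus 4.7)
The three inequalities will be proved by three separate techniques, each ultimately reducing to the ideal property of $\gamma$-norms via a representation formula for holomorphic functions.

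For \eqref{eq:estgamma1}, the plan is to represent the point evaluations $f(z_n)$ and $f'(z_n)$ (with $z_n:=n+s+ija$) as integrals of $f$ against orthogonal $L^2$-kernels of pairwise disjoint support inside $S_b$, and factor through $I_f$. Pick $0<r<\min(\tfrac12,b-a)$ so that the discs $D_n:=D(z_n,r)$ are pairwise disjoint and contained in $S_b$, uniformly in $s\in[0,1]$. By the mean value property, $f(z_n)=(\pi r^2)^{-1}\int_{D_n}f\,dA$, so the isometry $U\colon\ell^2(\Z)\to L^2(S_b)$ defined by $Ue_n=(\pi r^2)^{-1/2}\one_{D_n}$ makes $(I_f\circ U)e_n=\sqrt{\pi r^2}\,f(z_n)$; the ideal property then gives $\gamma(f(\cdot+s+ija))\leq(\pi r^2)^{-1/2}\|f\|_{\gamma(S_b;X)}$. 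For the derivative, take a radial $\phi\in C^\infty_c(\{|w|<r\})$ with $\int\phi\,dA=1$; differentiating the reproducing identity $f(z_n)=\int\phi(w-z_n)f(w)\,dA(w)$ in $z_n$ and integrating by parts in $u:=\Re w$ (using $f'=\partial_u f$ for holomorphic $f$) yields $f'(z_n)=\int K(w-z_n)f(w)\,dA(w)$ with $K:=-\partial_u\phi\in L^2(\C)$ still supported in $\{|w|<r\}$. Since the translates $K(\cdot-z_n)$ have disjoint support, the normalised versions $K(\cdot-z_n)/\|K\|_{L^2}$ form an orthonormal system in $L^2(S_b)$, and the same factorisation bounds $\gamma(f'(\cdot+s+ija))$ by $\|K\|_{L^2}\|f\|_{\gamma(S_b;X)}$. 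Both constants depend only on $a,b$, so the supremum over $s$ is controlled.

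For \eqref{eq:estgamma2}, I use the Poisson representation for the strip: for $|y|<b$,
\[
f(x+iy)=\bigl(P_+(\cdot,y)*f(\cdot+ib)\bigr)(x)+\bigl(P_-(\cdot,y)*f(\cdot-ib)\bigr)(x),
\]
where $P_\pm(\cdot,y)\in L^1(\R)$ are the two pieces of the strip's Poisson kernel (each with $L^1$-norm at most $1$). Testing against an arbitrary $h\in L^2(S_b)$ gives the factorisation $I_f=I_{f(\cdot+ib)}A_++I_{f(\cdot-ib)}A_-$ with $A_\pm:=B_\pm^*$, where $B_\pm\colon L^2(\R)\to L^2(S_b)$ is the convolution operator $B_\pm g(x+iy)=(P_\pm(\cdot,y)*g)(x)$. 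Young's inequality bounds $\|B_\pm\|$ by a constant depending only on $b$, and the ideal property of $\gamma$-norms then delivers \eqref{eq:estgamma2}.

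For \eqref{eq:estgamma3}, set $g:=f(\cdot+ijb)$. The fundamental theorem of calculus gives
\[
g(x)=g(\lfloor x\rfloor)+\int_0^1 g'(\lfloor x\rfloor+s)\one_{s<\{x\}}\,ds,\qquad x\in\R,
\]
with $\{x\}:=x-\lfloor x\rfloor$. Applying the triangle inequality for the $\gamma$-norm under the $s$-integral, the first summand equals $\bigl\|\sum_{n\in\Z}g(n)\one_{[n,n+1)}\bigr\|_{\gamma(\R;X)}=\gamma(g)$ because $\{\one_{[n,n+1)}\}$ is orthonormal in $L^2(\R)$; and for fixed $s\in[0,1]$ the integrand is $\sum_{n\in\Z}g'(n+s)\one_{[n+s,n+1)}$, whose $\gamma$-norm equals $\sqrt{1-s}\,\gamma(g'(\cdot+s))$ since the indicators $\one_{[n+s,n+1)}$ are orthogonal with common $L^2$-norm $\sqrt{1-s}$. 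Bounding $\sqrt{1-s}\leq 1$ and summing over $j\in\{-1,1\}$ completes the proof. The main obstacle is the derivative half of \eqref{eq:estgamma1}: unlike $f(z_n)$, which admits a direct mean value representation, $f'(z_n)$ has no obvious expression as an integral of $f$ against a single compactly supported kernel, and the integration-by-parts manoeuvre above is what retains the crucial disjointness of supports needed to preserve orthogonality without invoking any type or cotype hypothesis on $X$.
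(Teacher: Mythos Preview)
Your proof is correct and follows the same overall architecture as the paper's: disjointly supported $L^2$-kernels plus the ideal property for \eqref{eq:estgamma1}, the Poisson representation on the strip for \eqref{eq:estgamma2}, and the fundamental theorem of calculus plus the orthonormal indicators $\one_{[n,n+1)}$ for \eqref{eq:estgamma3}. For \eqref{eq:estgamma2} and \eqref{eq:estgamma3} your argument is essentially identical to the paper's.

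The one genuine difference is your treatment of the derivative term in \eqref{eq:estgamma1}. The paper uses Cauchy's integral formula directly: on disjoint annuli $A_n=\{r/2<|z-z_n|\le r\}$ it takes the explicit kernel $\psi_n(z)=c\,\one_{A_n}(z)/(z-z_n)$, so that $f'(z_n)=M\cdot I_f\psi_n$ by averaging Cauchy's formula over the radii in $[r/2,r]$. You instead start from the averaged reproducing identity $f(z_n)=\int\phi(w-z_n)f(w)\,dA(w)$ for a radial bump $\phi$, apply it to $f'$, and integrate by parts in $\Re w$ to land on the kernel $K=-\partial_{\Re}\phi$. Both routes produce $L^2$-kernels with pairwise disjoint supports and hence orthonormal after normalisation; the paper's kernel is explicit and tied to complex analysis, while yours is a real-variable smoothing argument that would work equally well for harmonic functions (a generalisation the paper mentions in a remark). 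Neither approach has an advantage in the present setting.
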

\begin{proof}
First assume $f\in \gamma(S_b;X)$ Fix a radius $0<r < \min\{1,b-a\}$, $s\in [0,1]$ and consider the disjoint balls $B_n = \{z:|z-n-s-ia|\leq r\}\subseteq S_{\alpha}$ for $n\in \Z$. Let $\phi_n = |B_n|^{-1/2} \one_{B_n}$ for $n\in \Z$. Then $(\phi_n)_{n\in \Z}$  is an orthonormal system in $L^2(S_b)$. By the mean value property (or via Cauchy's formula) one sees that
\[f(n+s+ia) = \frac{1}{|B_n|} \int_{B_n} f(z) \, |dz| = \frac{1}{r\pi^{\frac12}} I_f \phi_n.\]
It follows that for every $s\in [0,1]$ and $j\in \{-1,1\}$
\begin{align*}
r\pi^{\frac12} \gamma(f(\cdot+s+ ij a)) = \Big\|\sum_{n\in \Z} \gamma_n I_f \phi_n\Big\|_{L^2(\Omega;X)}\leq \|f\|_{\gamma(S_b;X)}.
\end{align*}
This proves the first estimate for $f$. For $f'$, we can use a similar argument. Consider the disjoint annuli $A_n = \{z:\frac{r}{2}<|z-n-s-ia|\leq r\}\subseteq S_{\alpha}$ for $n\in \Z$. Let $\psi_n = c \frac{\one_{A_n}}{(z-n-s-ia)}$
for $n\in \Z$ with
\[c^{-1} = \Big\|z\mapsto \frac{\one_{A_n}}{z-n-s-ia}\Big\|_{L^2(S_b)} = \int_{r/2}^r \int_{0}^{2\pi} |t e^{i x}|^{-2} t \, dx \, dt = 2\pi \log(2).\]
Then $(\psi_n)_{n\in \Z}$ is an orthonormal system in $L^2(S_b)$. Using Cauchy's formula one can check that
\[f'(n+s+ia) = \frac{1}{|A_n|} \int_{A_n} \frac{f(z)}{z-n-s-ia} \, |dz| = M  \cdot I_f \psi_n,\]
where $M = \frac{1}{c|A_n|} = \frac{4 \log(2)}{3\pi r^2}$.
It follows that for every $s\in [0,1]$ and $j\in \{-1,1\}$
\begin{align*}
M^{-1} \gamma(f'(\cdot+s+ ij a)) = \Big\|\sum_{n\in \Z} \gamma_n I_f \psi_n\Big\|_{L^2(\Omega;X)}\leq \|f\|_{\gamma(S_b;X)}.
\end{align*}
This completes the proof of \eqref{eq:estgamma1}.

To prove \eqref{eq:estgamma2} assume $f(\cdot+ijb)\in \gamma(\R;X)$ for $j\in \{-1,1\}$. We will use the Poisson formula for the strip (see \cite[1.10.3]{Tri} and \cite[Section 31]{RadH} for the Poisson formula for the strip $S_\alpha$ rotated by $90$ degrees):
\begin{equation}\label{eq:poissonstrip}
g(x+iy) = [k_y^{0} *g(\cdot+ib)](x) + [k_y^{1} *g(\cdot-ib)](x), \ \ \text{a.e.} \ x\in \R, |y|<\alpha
\end{equation}
if $g:S_\alpha\to \C$ is holomorphic on $S_{\alpha}$ and $L^2$-integrable on $\{z\in \C:\text{Im(z)}= \pm b\}$.
The kernels $k_y^j:\R\to \R$ are positive and satisfy
\[\|k_y^{0}\|_{L^1(\R)}  + \|k_y^{1}\|_{L^1(\R)} = \int_{\R} k_y^0(t)+k_y^1(t) \, dt = 1\]
for $-b<y<b$. As a consequence the mappings $K^{j}:L^2(\R)\to L^2(S_b)$ given by $K^{j}g (x+iy) = k_y^{j}*g(x)$ are bounded of norm $\leq 1$ and (see \cite[Proposition 4.4]{KalW04}) extend to $K^{j}:\gamma(L^2(\R);X)\to \gamma(L^2(S_b);X)$ of norm $\leq 1$ and moreover \eqref{eq:poissonstrip} holds with $g$ replaced by $f$. Therefore, we find
\begin{align*}
\|f\|_{\gamma(S_b;X)} &\leq  \|K^0 f(\cdot+ib)\|_{\gamma(S_b;X)} + \|K^1 f(\cdot-ib)\|_{\gamma(S_b;X)}
\\ & \leq \|f(\cdot+ib)\|_{\gamma(\R;X)} + \|f(\cdot-ib)\|_{\gamma(\R;X)}
\end{align*}
and the required estimate \eqref{eq:estgamma2} follows.

Finally, to prove \eqref{eq:estgamma3} we use the simple fact that for every $t\in \R$ we can write
\begin{align*}
f(t \pm i b)
= \sum_{n\in \Z} \one_{[n, n+1)}(t) f(n \pm ib) + \int_0^1 \sum_{n\in \Z} \one_{[n+s, n+1)}(t) f'(n+s\pm ib) \, ds.
\end{align*}
Now taking $\gamma$-norms with respect to $t\in \R$ on both sides we find
\begin{align*}
\|f(\cdot \pm b) \|_{\gamma(\R;X)}& \leq \Big\|\sum_{n\in \Z} \one_{[n, n+1)}(t) f(n \pm ib)\Big\|_{\gamma(\R,dt;X)} \\ & \ \ \ \ + \int_0^1 \Big\|\sum_{n\in \Z} \one_{[n+s, n+1)}(t) f'(n+s\pm ib) \Big\|_{\gamma(\R,dt;X)}\, ds
\\ & = \gamma(f(\cdot \pm  ib)) + \int_0^1 (1-s)^{\frac12} \gamma(f'(\cdot+s\pm ib)) \, ds
\end{align*}
from which the required result follows.
\end{proof}

\begin{lemma}\label{lem:estgammastandard}
Let $-\infty<a<b<\infty$.
\begin{enumerate}[$(1)$]
\item If $f\in W^{1,1}(a,b;X)$, then $f\in \gamma(a,b;X)$ and
\[\|f\|_{\gamma(a,b;X)} \leq (b-a)^{-\frac12}\|f(b)\| + (b-a)^{\frac12}\int_a^b \|f'(t)\|\, dt.\]
\item If $f\in \gamma(a,b;X)$ and $f'\in \gamma(a,b;X)$, then $f\in C([a,b];X))$ and
\[\sup_{t\in [a,b]}\|f(t)\|\leq (b-a)^{-\frac12} \|f\|_{\gamma(a,b;X)} + (b-a)^{\frac12} \|f'\|_{\gamma(a,b;X)}.\]
\end{enumerate}
\end{lemma}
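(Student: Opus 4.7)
For part (1), I plan to use the fundamental theorem of calculus $f(t) = f(b) - \int_t^b f'(s)\,ds$ and estimate the two summands in the $\gamma$-norm separately. The constant piece $t\mapsto f(b)$ contributes $\|\one_{(a,b)}\|_{L^2}\|f(b)\| = (b-a)^{1/2}\|f(b)\|$ via the simple-tensor identity recorded in the preliminaries. For the integral piece I apply Minkowski's inequality in the Banach space $\gamma(a,b;X)$ to pull the $ds$-integral outside the $\gamma$-norm; the integrand then becomes $\|t\mapsto \one_{(a,s)}(t)\|_{L^2(a,b)}\|f'(s)\| = (s-a)^{1/2}\|f'(s)\| \leq (b-a)^{1/2}\|f'(s)\|$, and integrating in $s$ produces the second summand of the bound.

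For part (2) the main device is a mean-value representation: averaging the identity $f(t) = f(s) + \int_s^t f'(u)\,du$ over $s\in(a,b)$ yields
\[
f(t) = \frac{1}{b-a}\int_a^b f(s)\,ds + \int_a^b K(t,u)\,f'(u)\,du,
\]
where $K(t,u) = \frac{u-a}{b-a}\one_{(a,t)}(u) - \frac{b-u}{b-a}\one_{(t,b)}(u)$ satisfies $\|K(t,\cdot)\|_{L^2(a,b)}^2 = \frac{(t-a)^3+(b-t)^3}{3(b-a)^2} \leq (b-a)$ uniformly in $t$. I then estimate each summand by duality via Fact~(a). Testing against $x^*\in X^*$ with $\|x^*\|\leq 1$ and applying Cauchy--Schwarz gives $\bigl|\bigl\langle \tfrac{1}{b-a}\int_a^b f\,ds,\, x^*\bigr\rangle\bigr| \leq (b-a)^{-1/2}\|\langle f,x^*\rangle\|_{L^2(a,b)} \leq (b-a)^{-1/2}\|f\|_{\gamma(a,b;X)}$ and $\bigl|\int_a^b K(t,u)\langle f'(u),x^*\rangle\,du\bigr|\leq \|K(t,\cdot)\|_{L^2}\|\langle f',x^*\rangle\|_{L^2}\leq (b-a)^{1/2}\|f'\|_{\gamma(a,b;X)}$. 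Taking the supremum over $x^*$ yields the desired pointwise bound for every $t\in[a,b]$.

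Continuity of $f$ then drops out of the same representation applied to the difference: $f(t)-f(t') = \int_a^b[K(t,u)-K(t',u)]f'(u)\,du$, and a direct computation shows $\|K(t,\cdot)-K(t',\cdot)\|_{L^2(a,b)} = |t-t'|^{1/2}$, so Fact~(a) gives $\|f(t)-f(t')\|\leq |t-t'|^{1/2}\|f'\|_{\gamma(a,b;X)}$ and in fact $f$ is $\tfrac12$-H\"older. No step looks genuinely hard; the main thing to get right is the systematic use of Fact~(a) to convert $\gamma$-estimates into $L^2$-estimates through duality, and the bookkeeping of the powers of $(b-a)$.
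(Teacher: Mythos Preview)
Your proposal is correct and follows essentially the same route as the paper. For part~(1) the paper simply cites \cite[Example 4.6]{KalW04}; your argument via $f(t)=f(b)-\int_t^b f'$ together with the tensor identity and Minkowski in $\gamma(a,b;X)$ is exactly the standard one behind that reference (note that your computation gives $(b-a)^{1/2}\|f(b)\|$, which is the correct exponent---the $(b-a)^{-1/2}$ in the displayed statement is a typo). For part~(2) the paper averages $f(t)=f(s)+I_{f'}\one_{[s,t]}$ over $s\in(a,b)$ and then applies the operator bound $\|I_g h\|\le \|h\|_{L^2}\|g\|_{\gamma}$ directly; your version carries out the Fubini to obtain the kernel $K(t,\cdot)$ explicitly and replaces the operator bound by the equivalent duality estimate from Fact~(a) plus Cauchy--Schwarz, which is the same mechanism in different clothing. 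Your additional observation that $K(t,\cdot)-K(t',\cdot)=-\one_{(t,t')}$ and hence $f$ is $\tfrac12$-H\"older is a nice bonus that the paper omits.
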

\begin{proof}
The estimate (1) follows from \cite[Example 4.6]{KalW04} (also see \cite[Proposition 13.9]{Nee10}).

To prove (2) note that for $a\leq s<t\leq b$, $f(t) = f(s)+\int_{s}^t f'(r) \, dr = f(s) + I_{f'} \one_{[s,t]}$.
Multiplying by $\one_{(a,b)}(s)$ and integrating over $s$ we find that
\[f(t) (b-a) = I_f(\one_{[a,b]}) + \int_a^b  I_{f'} \one_{[s,t]}\, ds.\]
Therefore,
\[\|f(t)\| \leq (b-a)^{-\frac12} \|f\|_{\gamma(a,b;X)} + (b-a)^{\frac12} \|f'\|_{\gamma(a,b;X)}.\]
\end{proof}

Let $X_1$ and $X_2$ be vector spaces with norms $\|\cdot\|_{X_1}$ and $\|\cdot\|_{X_2}$ which embed in a Hausdorff topological vector space $V$. If we write $\|x\|_{X_1}\lesssim \|x\|_{X_2}$, this means that $x\in X_2$ implies $x\in X_1$ and the stated estimate holds true. This notation will be used below.

\begin{lemma}\label{lem:analyticYlemma}
Assume $f:S_{\alpha}\to X$ is holomorphic and $0\leq a<b<c<d<\alpha$. Let $Y= \gamma(\R;X)$ or $Y = L^p(\R;X)$ with $p\in [1, \infty]$.
Then for any integer $k\geq 1$,
\begin{align}
\sup_{s\in [-a,a]}\|f(\cdot+is)\|_{Y} & \lesssim \int_{-b}^b \|f(\cdot+is)\|_{Y} \, ds \label{eq:estgammacont1}
\\ & \lesssim \sum_{j=0}^k \|f^{(j)}(\cdot+is)\|_{\gamma((-b,b), ds; Y)}  \label{eq:estgammacont2}
\\ & \lesssim \sup_{s\in [-b,b]}\sum_{j=0}^{k+1} \|f^{(j)}(\cdot+is)\|_{Y}  \label{eq:estgammacont3}
\\ & \lesssim \sup_{s\in [-c,c]}\|f(\cdot+is)\|_{Y}\label{eq:estgammacont4}
\\ & \lesssim \sum_{j\in \{-1,1\}}\|f(\cdot+ijd)\|_{Y}. \label{eq:estgammacont5}
\end{align}
\end{lemma}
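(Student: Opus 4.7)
My plan is to establish the five estimates in order, using a different tool for each: the mean-value property for holomorphic functions, the two parts of Lemma~\ref{lem:estgammastandard}, the Cauchy integral formula for derivatives, and the Poisson representation for the strip (the latter already used in the proof of Lemma~\ref{lem:analyticgammadiscrte}). The unifying structural fact used throughout is the invariance of $Y$ under real-axis translations, which holds for $L^p(\R;X)$ by standard Lebesgue theory and for $\gamma(\R;X)$ by Facts(d).

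For \eqref{eq:estgammacont1} I fix $s_0\in[-a,a]$ and take $r=b-a$, so that $B(is_0,r)\subseteq S_b$. Applying the scalar mean-value formula $f(t+is_0)=\frac{1}{\pi r^2}\int_{B(is_0,r)} f(t+w)\,dA(w)$ pointwise in $t$, then taking $Y$-norms and invoking Minkowski's integral inequality together with real-translation invariance, reduces the double integral to one of $\|f(\cdot+i\sigma)\|_Y$ over $\sigma\in[s_0-r,s_0+r]\subseteq[-b,b]$, yielding the desired bound. For \eqref{eq:estgammacont5} I follow the strategy of Lemma~\ref{lem:analyticgammadiscrte}: the Poisson formula \eqref{eq:poissonstrip} for the strip expresses $f(x+iy)$ as a sum of convolutions of $f(\cdot\pm id)$ with positive $L^1$-kernels of total mass one; applying this for $|y|\le c<d$ and invoking Facts(g) (or the standard contraction estimate for $L^p$-convolutions) gives $\|f(\cdot+iy)\|_Y\le \|f(\cdot+id)\|_Y+\|f(\cdot-id)\|_Y$ uniformly in $y$. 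For \eqref{eq:estgammacont4} I use the Cauchy integral formula for $f^{(j)}$ on a circle of radius $r=c-b$ centered at $t+is$; parametrising and writing $is+re^{i\theta}=r\cos\theta+i(s+r\sin\theta)$ lets me factor out a real translation, and translation invariance reduces the integrand to $\|f(\cdot+i(s+r\sin\theta))\|_Y$, which is controlled by $\sup_{|\sigma|\le c}\|f(\cdot+i\sigma)\|_Y$; summing over $j\le k+1$ finishes this step.

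The middle two estimates \eqref{eq:estgammacont2} and \eqref{eq:estgammacont3} are applications of Lemma~\ref{lem:estgammastandard} to the $Y$-valued map $g(s)=f(\cdot+is)$ on $(-b,b)$, whose derivatives are $g^{(j)}(s)=i^j f^{(j)}(\cdot+is)$. For \eqref{eq:estgammacont3}, part~(1) of that lemma applied to each $g^{(j)}$ bounds $\|g^{(j)}\|_{\gamma(-b,b;Y)}$ by a boundary value plus an integral of $\|g^{(j+1)}\|_Y$, both dominated by the relevant suprema over $[-b,b]$; summing over $j=0,\dots,k$ then gives the $\sum_{j=0}^{k+1}$ expression on the right of \eqref{eq:estgammacont3}. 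For \eqref{eq:estgammacont2}, part~(2) of the same lemma produces $\sup_{s\in[-b,b]}\|g(s)\|_Y\lesssim \|g\|_{\gamma(-b,b;Y)}+\|g'\|_{\gamma(-b,b;Y)}$, and multiplying by the length of the interval turns the supremum into the $L^1$-integral appearing on the left-hand side of \eqref{eq:estgammacont2}; any $k\ge 1$ then suffices. I expect the main technical nuisance to be the (mostly routine) justification, in the $\gamma$-valued setting, that the operations invoked above — mean-value integration, $s$-differentiation, convolution with Poisson kernels, and Cauchy contour integration — genuinely produce $Y$-valued functions obeying the cited norm inequalities; this is handled by combining the ideal property and Facts(b)--(g) with the vector-valued holomorphy of $f$ on $S_\alpha$.
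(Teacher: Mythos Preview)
Your plan is correct, and for \eqref{eq:estgammacont2}, \eqref{eq:estgammacont3}, \eqref{eq:estgammacont5} it coincides with the paper's argument. The differences are in \eqref{eq:estgammacont1} and \eqref{eq:estgammacont4}.

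For \eqref{eq:estgammacont4} the paper proceeds more abstractly: assuming $\sup_{|s|\le c}\|f(\cdot+is)\|_Y<\infty$, it defines $F:R_c\to Y$ on the rectangle $R_c=[-1,1]\times[-c,c]$ by $F(x+iy)=f(x+\cdot+iy)$, verifies that $F$ is bounded (by translation invariance) and weakly holomorphic against the separating family $\{\one_I\otimes x^*\}$, and then invokes \cite[Theorem~A.7]{ABHN} to conclude that $F$ is genuinely $Y$-valued holomorphic. Lemma~\ref{lem:analyticbddgamma} applied to $F$ then gives all the derivative bounds at once. Your route---pointwise Cauchy formula followed by $Y$-norms and translation invariance---gives the same estimate more concretely. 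For \eqref{eq:estgammacont1} the paper again uses the Poisson representation, this time on the strip $\{|\text{Im}\,z|<\theta\}$ with $\theta\in(a,b)$, obtaining $\|f(\cdot+is)\|_Y\le\|f(\cdot+i\theta)\|_Y+\|f(\cdot-i\theta)\|_Y$ and then integrating over $\theta\in(a,b)$; your area mean-value argument is an equally valid alternative.

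The trade-off is exactly the one you anticipate in your final paragraph. By establishing $Y$-valued holomorphy of $z\mapsto f(\cdot+z)$ up front, the paper makes every subsequent Bochner integral, Minkowski inequality, and $s$-derivative automatic: once $F$ is $Y$-holomorphic it is $Y$-continuous, so all the vector-valued integrals you need are honest Bochner integrals and the triangle inequality applies. Your approach postpones this, working pointwise in $t$ and lifting to $Y$ at the end; the justification you label ``mostly routine'' is essentially the same weak-holomorphy-implies-strong-holomorphy step the paper does explicitly, and without it the Minkowski step for $Y=\gamma(\R;X)$ (which needs strong measurability of $w\mapsto f(\cdot+w)\in\gamma(\R;X)$) is not quite immediate. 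So both arguments are correct, but the paper front-loads the one non-obvious ingredient while you distribute it across several steps.
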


As a consequence of this result all the norms in Lemma \ref{lem:analyticgammadiscrte} are connected to the above expressions as well.

\begin{proof}
We first prove \eqref{eq:estgammacont4}. Assume $C:=\sup_{s\in [-c,c]}\|f(\cdot+is)\|_{Y}<\infty$ and let $R_{c} = \{x+iy: x\in [-1,1], y\in [-c, c] \}$. We can define $F:R_c\to Y$ by $F(x+iy)(t) = f(x+t+iy)$. We claim this function is holomorphic. To prove that $F$ is holomorphic it suffices by \cite[Theorem A.7]{ABHN} to show that it is bounded and $z\mapsto \lb F(z), g\rb$ is holomorphic for all $g\in G$, where $G\subseteq Y^*$ separates the points of $Y$. Indeed, $F$ is bounded since for each $x+iy\in R_{c}$, by translation invariance
\[\|F(x+iy)\|_{Y} = \|f(\cdot+x+iy)\|_{Y} = \|f(\cdot+iy)\|_{Y}\leq C.\]
Now let
\[G = \{\one_{I} \otimes x^*: \text{$I\subseteq \R$ is a bounded interval}, x^*\in X^*\}.\]
Then $G$ separates the points of $Y$. Moreover,
\[\lb F(x+iy), \one_{I} \otimes x^*\rb = \int_{I} \lb f(x+iy+t), x^*\rb \, dt \]
and the latter is holomorphic since it is the uniform limit of a sequence of holomorphic functions given by Riemann sums. Now the claim follows and moreover $\|F\|_{L^\infty(R_c;Y)}\leq C$.

From the claim and Lemma \ref{lem:analyticbddgamma} we find that for all integers $\ell\geq 0$, $F\in W^{\ell,\infty}(R_b;Y)$ and for all $x+iy\in R_b$,
\[\|f^{(j)}(x+iy)\|_Y= \|f^{(j)}(\cdot+x+iy)\|_Y = \|F^{(j)}(x+iy)\|_Y \lesssim \|F\|_{L^\infty(R_c;Y)} \leq C.\]
and \eqref{eq:estgammacont4} follows.

The estimates \eqref{eq:estgammacont2} and \eqref{eq:estgammacont3} are immediate from Lemma \ref{lem:estgammastandard}.

Finally we prove \eqref{eq:estgammacont1} and \eqref{eq:estgammacont5} by using a Poisson transformation argument.
As in the proof of Lemma \ref{lem:analyticgammadiscrte} one sees that for all $\theta\in (-b,b)\setminus [-a,a]$ and all $(t,s)\in \R\times [-a,a]$ we can write:
\begin{equation}\label{eq:poissonstrip2}
f(t+is) = [k_s^{0} *f(\cdot+i\theta)](t) + [k_s^{1} *f(\cdot-i\theta)](t),
\end{equation}
and hence
\[\|f(\cdot+is)\|_{Y} \leq \|f(\cdot+i\theta)\|_{Y} + \|f(\cdot-i\theta)\|_{Y}.\]
Now an integration over $\theta\in (a,b)$ gives \eqref{eq:estgammacont1}.

Estimate \eqref{eq:estgammacont5} can be proved in the same way if one takes $\theta = d$ and $s\in [-c,c]$.
\end{proof}

\section{Embedding results for holomorphic functions and type and cotype}

Recall that the strip $S_{\alpha}$ is given by $S_{\alpha} = \{z\in \C: |\text{Im}(z)|<\alpha\}$.
We also define the sector $\Sigma_{\sigma}$ by
\[\Sigma_{\sigma} = \{z\in \C\setminus\{0\}: |\text{arg}(z)|<\sigma\}.\]

\subsection{Type and cotype\label{subsec:type}}

In the next results we characterize the type and cotype of a Banach space $X$ by embedding results for holomorphic functions on a strip and sector, respectively. For more details on type and cotype we refer to \cite{DJT}.

Let $(\varepsilon_n)_{n\geq 1}$ be an i.i.d.\ sequence with $\P(\varepsilon_n = 1) = \P(\varepsilon_n = -1) = \frac12$. A space $X$ is said to have {\em type $p$} if there exists
a constant $\tau\ge 0$ such that for all $x_1,\dots,x_N$ in
$X$ we have
\begin{equation*}\label{eq:type}
\Big(\E \Big\| \sum_{n=1}^N \varepsilon_n x_n\Big\|^p\Big)^{1/p} \le \tau
\Big(\sum_{n=1}^N \| x_n\|^p\Big)^{1/p}.
\end{equation*}
A space $X$ is said to have {\em cotype $q$} if there exists
a constant $c\ge 0$ such that for all $x_1,\dots,x_N$ in $X$
we have
\begin{equation*}\label{eq:cotype}
\begin{aligned}
 \Big(\sum_{n=1}^N \|x_n\|^q\Big)^{1/q}
 & \le c\Big(\E \Big\| \sum_{n=1}^N \eps_n x_n\Big\|^q\Big)^{1/q},
\end{aligned}
\end{equation*}
with the obvious modification for $q=\infty$. Recall that
\begin{enumerate}
\item Every space $X$ has type $1$ and cotype $\infty$.
\item If $p_1>p_2$, then type $p_1$ implies type $p_2$.
\item If $q_1<q_2$, then cotype $q_1$ implies type $q_2$.
\item Hilbert spaces have type $2$ and cotype $2$.
\item $X = L^r$ for $1\leq r<\infty$ has type $r\wedge 2$ and cotype $r\vee 2$.
\end{enumerate}

\subsection{Statement of the main results}

In the next results we characterize type $p$ and cotype $q$ in terms of an embedding for holomorphic functions on both the strip and sector.
\begin{theorem}[Characterization of type]\label{thm:gammaanalytictype}
Let $X$ be a Banach space and $p\in [1, 2]$. Let $0\leq a<b<\alpha<\pi$. The following are equivalent:
\begin{enumerate}[$(1)$]
\item $X$ has type $p$
\item For all holomorphic functions $f:S_{\alpha}\to X$, the following estimate holds
\begin{align*}
\sum_{j\in \{-1,1\}} \|f(t + ija)\|_{\gamma(\R,dt;X)} \lesssim \sum_{j\in \{-1,1\}} \Big(\int_\R \|f(t+i j b)\|^p \, dt\Big)^{\frac1p},
\end{align*}
whenever the right-hand side is finite.
\item For all holomorphic functions $f:\Sigma_{\alpha}\to X$, the following estimates holds
\begin{align*}
\sum_{j\in \{-1,1\}} \|f(e^{ij a} t)\|_{\gamma(\R_+,\frac{dt}{t};X)}
\lesssim \sum_{j\in \{-1,1\}} \Big(\int_0^\infty \|f(e^{ij b}t)\|^p \, \frac{dt}{t}\Big)^{\frac1p}.
\end{align*}
whenever the right-hand side is finite.
\end{enumerate}
\end{theorem}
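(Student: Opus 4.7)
The proof splits into three implications. My plan is to dispatch $(2)\Leftrightarrow(3)$ by a conformal change of variables, then establish $(1)\Rightarrow(2)$ by combining the discretization of Lemma~\ref{lem:analyticgammadiscrte} with type $p$, and finally prove $(2)\Rightarrow(1)$ by testing the estimate against explicit holomorphic test functions.

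For $(2)\Leftrightarrow(3)$, I would use $w=e^z$, a biholomorphic bijection $S_\alpha\to\Sigma_\alpha$ (since $\alpha<\pi$) that sends the line $\{\Im z=\pm a\}$ onto the ray $\{\arg w=\pm a\}$. The substitution $t=e^x$ turns $dt/t$ on $\R_+$ into $dx$ on $\R$, so $h\mapsto h\circ\log$ is an $L^2$-isometry between $L^2(\R_+,dt/t)$ and $L^2(\R,dx)$. Via the identification \eqref{eq:Tassf} this lifts to an isometry of the corresponding $\gamma$-spaces, and trivially preserves $L^p$ norms, so (2) and (3) are interchanged by $g(z):=f(e^z)$.

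For $(1)\Rightarrow(2)$, apply Lemma~\ref{lem:analyticgammadiscrte}\eqref{eq:estgamma3} at height $a$ to get
\[
\|f(\cdot+ija)\|_{\gamma(\R;X)}\lesssim \int_0^1 \gamma(f'(\cdot+s+ija))\,ds + \gamma(f(\cdot+ija)).
\]
Each discrete Gaussian sum $\gamma(g(\cdot+s+ija))=\|\sum_{n\in\Z}\gamma_n g(n+s+ija)\|_{L^2(\Omega;X)}$, for $g\in\{f,f'\}$, is dominated by $\tau_p(\sum_n\|g(n+s+ija)\|^p)^{1/p}$ using Gaussian type $p$, which is equivalent to Rademacher type $p$ on $[1,2]$ by a theorem of Maurey. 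To translate the resulting discrete $\ell^p$ sum into a continuous $L^p$ integral, fix $\rho>0$ with $\rho<\min\{1/2,b-a\}$ so that the disks $B(n+s+ija,\rho)$ are pairwise disjoint and contained in $S_b$: Cauchy's mean value formula combined with Jensen's inequality yields $\|f(w)\|^p\lesssim \rho^{-2}\int_{B(w,\rho)}\|f(z)\|^p\,dz$, and Cauchy's formula for the derivative (averaged over the radius and followed by Jensen) yields $\|f'(w)\|^p\lesssim \rho^{-p-2}\int_{B(w,\rho)}\|f(z)\|^p\,dz$. Summing over $n\in\Z$ and applying Fubini bounds the discrete sums by $\int_{a-\rho}^{a+\rho}\|f(\cdot+iy)\|_{L^p(\R;X)}^p\,dy$. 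Choosing $c$ with $a+\rho<c<b$ and invoking Lemma~\ref{lem:analyticYlemma}\eqref{eq:estgammacont5} with $Y=L^p(\R;X)$ controls this integral by $\|f(\cdot+ib)\|_{L^p(\R;X)}+\|f(\cdot-ib)\|_{L^p(\R;X)}$. Concatenating these inequalities delivers (2).

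For $(2)\Rightarrow(1)$, given $x_1,\ldots,x_N\in X$ I would test (2) on $f(z):=\sum_{n=1}^N \phi(z-n)\,x_n$ for an entire scalar profile $\phi$ with rapid decay on horizontal lines; the Gaussian $\phi(z)=e^{-z^2}$ is the natural template. On the line $\{\Im z=a\}$ the translates $\phi(\cdot-n+ia)$ form an approximately orthonormal family in $L^2(\R)$, so the left-hand side of (2) is bounded below by $c\,\|\sum_n\gamma_n x_n\|_{L^2(\Omega;X)}$; on the line $\{\Im z=b\}$ those translates are essentially disjointly concentrated, so the right-hand side is bounded above by $C\,(\sum_n\|x_n\|^p)^{1/p}$. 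Applying (2) then forces Gaussian type $p$, which is equivalent to Rademacher type $p$. The main obstacle of the whole proof lies precisely in this calibration of $\phi$: a single profile must simultaneously behave as a Riesz basis at the inner height $a$ and as a system of nearly disjointly supported bumps at the outer height $b$, and both estimates must be made quantitative and compatible. By contrast $(1)\Rightarrow(2)$ is a largely mechanical chaining of the lemmas of Section~3 with Maurey's theorem and standard Cauchy/Jensen pointwise bounds.
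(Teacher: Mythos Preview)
Your proof of $(2)\Leftrightarrow(3)$ and $(1)\Rightarrow(2)$ is correct and matches the paper's approach closely; the only cosmetic difference is that the paper discretizes at an intermediate height $b'\in(a,b)$ and then integrates the translation parameter over $[0,1]$ to pass from $\ell^p$ to $L^p$, whereas you discretize at height $a$ and use the Cauchy/Jensen disk argument to land inside the strip $\{|y\mp a|<\rho\}$ before invoking Lemma~\ref{lem:analyticYlemma}. Both routes are standard and equivalent.

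For $(2)\Rightarrow(1)$ you take a genuinely different path. The paper does not use Gaussians with equally spaced centers; it uses $\phi_n(z)=\mathrm{sinc}(2\pi z-c_n)$ with lacunary centers $c_n=3\pi\cdot 2^n$. The point of the sinc choice is that the $\phi_n$ are the Fourier transforms of $e^{ic_n x}\one_{(-1,1)}$ and hence \emph{exactly} orthogonal in $L^2(\R)$, so the $\gamma$-norm equals $\sqrt{2}\,\|\sum_n\gamma_n x_n\|$ on the nose; but the sinc has only $1/|t|$ decay, which forces the lacunary spacing and the somewhat delicate splitting into the three ranges $(-\infty,a_1)$, $(a_m,a_{m+1})$, $(a_{N+1},\infty)$ with the estimates \eqref{eq:helpterm1}--\eqref{eq:helpterm2}. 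Your Gaussian profile trades exact orthogonality for rapid decay: the $L^p$ upper bound on the boundary line becomes a one-line Young inequality for $\ell^1*\ell^p$, while on the $\gamma$-side you must argue that the integer translates of $e^{-(\cdot)^2}$ form a Riesz system (their Gram matrix $G_{mn}=\sqrt{\pi/2}\,e^{-(m-n)^2/2}$ is Toeplitz with summable off-diagonal, hence bounded and boundedly invertible), so that the ideal property of $\gamma$-norms gives $\|f\|_{\gamma(\R;X)}\eqsim\|\sum_n\gamma_n x_n\|$. This is a legitimate alternative and arguably cleaner; just be aware that the Riesz-basis step replaces the paper's exact orthogonality and should be stated explicitly, and it is simplest to first reduce to $a=0$ via Lemma~\ref{lem:analyticYlemma} (as the paper does) so that the Gram matrix stays real.
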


\begin{theorem}[Characterization of cotype]\label{thm:gammaanalyticcotype}
Let $X$ be a Banach space and $q\in [2, \infty]$. Let $0\leq a<b<\alpha<\pi$. The following are equivalent:
\begin{enumerate}[$(1)$]
\item $X$ has cotype $q\in [2, \infty]$.
\item For all holomorphic functions $f:S_{\alpha}\to X$, the following estimate holds
\begin{align*}
\sum_{j\in \{-1,1\}} \Big(\int_\R \|f(t + i ja)\|^q \, dt\Big)^{\frac1q} & \lesssim \sum_{j\in \{-1,1\}} \|f(t + ijb)\|_{\gamma(\R,dt;X)}
\end{align*}
whenever the right-hand side is finite.
\item For all holomorphic functions $f:\Sigma_{\alpha}\to X$, the following estimate holds
\begin{align*}
\sum_{j\in \{-1,1\}} \Big(\int_0^\infty \|f(e^{ij a}t)\|^q \, \frac{dt}{t}\Big)^{\frac1q} & \lesssim
\sum_{j\in \{-1,1\}} \|f(e^{ij b} t)\|_{\gamma(\R_+,\frac{dt}{t};X)}
\end{align*}
whenever the right-hand side is finite.
\end{enumerate}
\end{theorem}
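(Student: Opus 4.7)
The plan is to prove $(1)\Leftrightarrow(2)\Leftrightarrow(3)$. For $(2)\Leftrightarrow(3)$ I use the conformal substitution $z=e^w$, which maps the strip $S_\alpha$ bijectively onto the sector $\Sigma_\alpha$ and sends the horizontal line $\{\mathrm{Im}\,w=\pm s\}$ to the ray $\{\arg z=\pm s\}$; setting $g(w):=f(e^w)$, the change of variable $t=e^u$ turns $dt/t$ into $du$, and since the $\gamma$-norm depends only on the Hilbertian structure of the underlying $L^2$-space, the quantities $\|f(e^{\pm ia}\cdot)\|_{L^q(\R_+,dt/t;X)}$ and $\|f(e^{\pm ia}\cdot)\|_{\gamma(\R_+,dt/t;X)}$ coincide respectively with $\|g(\cdot\pm ia)\|_{L^q(\R;X)}$ and $\|g(\cdot\pm ia)\|_{\gamma(\R;X)}$. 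So the substantive work is $(1)\Leftrightarrow(2)$.

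For $(1)\Rightarrow(2)$ with $q<\infty$, I discretize using Fubini:
\[\int_\R\|f(t\pm ia)\|^q\,dt = \int_0^1\sum_{n\in\Z}\|f(n+s\pm ia)\|^q\,ds,\]
apply the cotype $q$ inequality in Gaussian form — which follows from the Rademacher version via $\|\sum\varepsilon_n y_n\|\leq\sqrt{\pi/2}\|\sum\gamma_n y_n\|$ — pointwise in $s$ to obtain $(\sum_n\|f(n+s\pm ia)\|^q)^{1/q}\lesssim \gamma(f(\cdot+s\pm ia))$, then invoke \eqref{eq:estgamma1} of Lemma \ref{lem:analyticgammadiscrte} to bound the right-hand side uniformly in $s\in[0,1]$ and $j\in\{-1,1\}$ by $\|f\|_{\gamma(S_b;X)}$, which by \eqref{eq:estgamma2} is dominated by $\sum_j\|f(\cdot+ijb)\|_{\gamma(\R;X)}$. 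For $q=\infty$ cotype is automatic, and the inequality in $(2)$ follows from the Poisson representation
\[f(t\pm ia) = (k_a^0*f(\cdot+ib))(t) + (k_a^1*f(\cdot-ib))(t)\]
together with the Section~2 bound $\|T\|_{\mathcal{L}(L^2,X)}\leq\|T\|_{\gamma(L^2,X)}$ and the fact that $k_a^j\in L^2(\R)$.

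For $(2)\Rightarrow(1)$, given $x_1,\dots,x_N\in X$ I apply $(2)$ to the test function $f(z) = \sum_{n=1}^N e^{-\pi(z-\lambda n)^2}x_n$, with $\lambda = k/b$ for a large integer $k$. On $\{\mathrm{Im}\,z=\pm b\}$ the shifts $\phi_n(t)=e^{-\pi(t-\lambda n)^2}$ appear multiplied by the phases $e^{\pm 2\pi ib\lambda n}=1$ (by the choice of $\lambda$) and a global unimodular factor $e^{\mp 2\pi ibt}$ that does not affect the $\gamma$-norm by Fact (b); for $\lambda$ large the normalized shifts form a Riesz system in $L^2(\R)$ (the Gram matrix is Toeplitz with a strictly positive Jacobi-theta symbol) whose constants approach $1$ as $\lambda\to\infty$, so $\|f(\cdot\pm ib)\|_{\gamma(\R;X)}\lesssim \|\sum_n \gamma_n x_n\|_{L^2(\Omega;X)}$. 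On $\{\mathrm{Im}\,z=\pm a\}$ the modulations are unimodular and I partition $\R$ into intervals $B_n=[\lambda n-\delta,\lambda n+\delta]$; on $B_n$ one has $|e^{-\pi(t-\lambda n\pm ia)^2}|\geq e^{\pi a^2}e^{-\pi\delta^2}$, while H\"older bounds the tail by $\sum_{m\neq n}|e^{-\pi(t-\lambda m\pm ia)^2}|\|x_m\|\leq C_\lambda(\sum_m\|x_m\|^q)^{1/q}$ with $C_\lambda=O(e^{-c\lambda^2})$. Choosing $\lambda$ of order $\sqrt{\log N}$ ensures that the indices where the leading term dominates the tail contribute at least half of $S^q:=\sum\|x_n\|^q$, yielding $\|f(\cdot\pm ia)\|_{L^q(\R;X)}\gtrsim S$. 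Substituting into $(2)$ gives cotype $q$ with a constant independent of $N$.

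The main technical obstacle is the $L^q$-lower bound in $(2)\Rightarrow(1)$: Gaussian translates are not strictly disjointly supported, and for sequences with very unequal norms the cross-interference on each $B_n$ must be controlled uniformly in both $N$ and the $x_n$'s. Letting $\lambda$ grow slowly with $N$ works because the cross-interference decays like $e^{-c\lambda^2}$, which easily beats any polynomial growth in $N$, while the Riesz-basis constant on the $\gamma$-side and the Gaussian-integral constant on the $L^q$-side both stabilize, giving the $N$-independent cotype constant required.
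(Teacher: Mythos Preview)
Your argument is correct. The parts $(2)\Leftrightarrow(3)$ and $(1)\Rightarrow(2)$ (including your separate treatment of $q=\infty$ via the Poisson kernel and the bound $\|T\|_{\calL}\le\|T\|_\gamma$) match the paper's proof essentially verbatim.

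For $(2)\Rightarrow(1)$ you take a genuinely different route. The paper uses the test function $f=\sum_n\phi_n x_n$ with $\phi_n(z)=\mathrm{sinc}(2\pi z-c_n)$ at \emph{exponentially} spaced centers $c_n=3\pi r^n$; the $\hat\phi_n$ are exactly orthogonal, so the $\gamma$-side is an identity, and the $L^q$-lower bound is obtained by estimating cross terms on dyadic blocks and letting the fixed base $r$ be large enough that the error series $\sum_m T_{1,m}(r)+T_{2,m}(r)$ drops below the main constant $C_q$. You instead use Gaussians $e^{-\pi(z-\lambda n)^2}$ at \emph{linearly} spaced centers, accept a Riesz-sequence (rather than orthogonal) bound on the $\gamma$-side, and handle the $L^q$-lower bound by a good-set argument with spacing $\lambda\sim\sqrt{\log N}$: the Gaussian tails give $C_\lambda=O(e^{-c\lambda^2})$, so $N C_\lambda^q\to 0$ and the bad indices carry at most half of $S^q$. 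The Riesz constant tends to $2^{-1/4}$ as $\lambda\to\infty$, and your $L^q$-constant $\delta^{1/q}\tfrac12 e^{\pi a^2}e^{-\pi\delta^2}$ does not depend on $\lambda$, so the final cotype constant is indeed $N$-independent. (Two minor remarks: the constraint $\lambda=k/b$ is unnecessary, since for complex Gaussians $\|\sum\gamma_n e^{i\theta_n}x_n\|=\|\sum\gamma_n x_n\|$; and for small $N$ you should floor $\lambda$ at some $\lambda_0$ so the Riesz and tail estimates are already in force.) The paper's choice buys an exact orthogonality identity and a parameter $r$ that is fixed once and for all; your choice buys much simpler tail estimates at the price of letting $\lambda$ drift with $N$, which is harmless here.
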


\begin{remark}\label{rem:abouthm}
\
\begin{enumerate}[(i)]
\item In both results the condition $\alpha<\pi$ is not needed in the strip case. This is clear from the proofs
    below, but also follows by substituting $z\mapsto K z$ for a suitable $K>0$.
\item In case $X$ has type $2$ the embedding $L^2(T;X)\hookrightarrow \gamma(T;X)$ holds for any measure space $(T, \mathcal{T}, \mu)$. The reverse embedding in the cotype $2$ situation also holds (see \cite[Theorem 11.6]{Nee10}).
\item By Lemma \ref{lem:analyticgammadiscrte} for $\gamma$-norms and a similar version for $L^p$-norms, (2) in Theorem \ref{thm:gammaanalytictype} can be replaced by
\begin{align*}
\|f\|_{\gamma(S_a;X)} \lesssim \|f\|_{L^p(S_b;X)}.
\end{align*}
Similarly (2) of Theorem \ref{thm:gammaanalyticcotype} can be replaced this analogues norm estimate. See Remark \ref{rem:sector} for an alternative formulation of the norm estimate on the sector.
\item As a consequence of Theorems \ref{thm:gammaanalytictype} and \ref{thm:gammaanalyticcotype} and Kwapien's result (see \cite{Kwap72}), the $\gamma$-norm on a strip $S_a$ can be estimated from above and below by the $L^2$-norm on a slightly larger and smaller strip respectively, if and only if $X$ is isomorphic to a Hilbert space. The same holds for sectors.
\end{enumerate}
\end{remark}

\subsection{Proofs of the main results}

\begin{proof}[Proof of Theorem \ref{thm:gammaanalytictype} $(1)\Rightarrow (2)$]
Assume $X$ has type $p$. Let $a<b'<b$. By the type $p$ assumption, we have
$\gamma(f)\lesssim_{p,X} \Big(\sum_{n\in \Z} \|f(n)\|^p\Big)^{\frac1p}$. Therefore, by Lemma \ref{lem:analyticgammadiscrte} we find that for every $r\in [0,1]$,
\begin{align*}
\sum_{j\in \{-1,1\}}\|f(\cdot + ija)\|_{\gamma(\R_+;X)} & = \sum_{j\in \{-1,1\}}\|f(\cdot+r+ ija)\|_{\gamma(\R_+;X)} \\ & \lesssim \sum_{j\in \{-1,1\}} \int_{0}^1 \Big(\sum_{n\in \Z} \|f'(t+n+r+ijb')\|^p\Big)^{\frac1p}\, dt
\\ & \ \  + \sum_{j\in \{-1,1\}} \Big(\sum_{n\in\Z}\|f(r+n+ijb')\|^p\Big)^{\frac1p}.
\end{align*}
Taking $p$-th powers and integrating over all $r\in [0,1]$ we find
\begin{align*}
\sum_{j\in \{-1,1\}}\|f(\cdot + ija)\|_{\gamma(\R_+;X)} \lesssim \sum_{j\in \{-1,1\}} \|f'(\cdot+ijb')\|_Y + \|f(\cdot+ijb')\|_Y,
\end{align*}
where $Y = L^p(\R;X)$. By Lemma \ref{lem:analyticYlemma} the latter can be estimated by
\[\sum_{j\in \{-1,1\}} \Big(\|f'(\cdot+ijb')\|_Y + \|f(\cdot+ijb')\|_Y\Big) \lesssim \sum_{j\in \{-1,1\}} \|f(\cdot+ijb)\|_Y\]
which completes the proof of (1).
\end{proof}

\begin{proof}[Proof of Theorem \ref{thm:gammaanalyticcotype} $(1)\Rightarrow (2)$]
Assume $X$ has cotype $q$. Let $a<b$. Fix $t\in [0,1]$. First assume $q<\infty$. By the cotype  $q$ condition and Lemma \ref{lem:analyticgammadiscrte},
\begin{align*}
\sum_{j\in \{-1,1\}} \Big(\sum_{n\in\Z} \|f(t + i ja)\|^q \Big)^{\frac1q}
& \lesssim \sum_{j\in \{-1,1\}} \gamma(f(\cdot +t + i ja))
\\ & \lesssim  \sum_{j\in \{-1,1\}} \|f(\cdot+ijb)\|_{\gamma(\R;X)}.
\end{align*}
Taking $q$-th powers and integrating over all $t\in [0,1]$ yields the required result. If $q=\infty$ one should replace the $\ell^q$-sum on the left-hand side by a supremum over all $n$.
\end{proof}

\begin{proof}[Proof of $(2)\Leftrightarrow (3)$ for Theorems \ref{thm:gammaanalytictype} and \ref{thm:gammaanalyticcotype}]
The map $z\mapsto e^z$ is a holomorphic bijection from the strip $S_{\alpha}$ onto the sector $\Sigma_{\alpha}$. Note that for the substitution $s = e^{t}$ one has $dt=\frac{ds}{s}$ which gives the additional division by $t$ in (3) in both cases.
\end{proof}

\begin{remark}\label{rem:sector}
\
\begin{enumerate}[(i)]
\item Arguing as in the above proof of $(2)\Leftrightarrow (3)$, Remark \ref{rem:abouthm} can be used to see that (3) of Theorem \ref{thm:gammaanalytictype} can be replaced by
\begin{align*}
\|f\|_{\gamma(\Sigma_{a};X)} \lesssim \|f\|_{L^p(\Sigma_b;X)}
\end{align*}
and a similar assertion holds for (3) of Theorem \ref{thm:gammaanalyticcotype}.
\item By the same type of argument as in the proof of $(2)\Leftrightarrow (3)$ one can create discrete norms (dyadic) for holomorphic functions on sectors. Indeed, if $f$ is defined on a sector $\Sigma_{\alpha}$ we can apply Lemmas \ref{lem:analyticgammadiscrte} and \ref{lem:analyticYlemma} to $g$ given by $g(z) = f(2^z)$ on the strip $S_{\beta}$  with $\beta = \log(2)^{-1} \alpha$.
\end{enumerate}
\end{remark}

\begin{proof}[Proof of Theorem \ref{thm:gammaanalytictype} $(2)\Rightarrow (1)$]

{\em Step 1}:
Before we start the proof we introduce special functions $\phi_n$ and state some of their properties. Let $(c_n)_{n\geq 1}$  be a strictly  increasing sequence such that $c_n/(2\pi)\in \N$.
Let $\phi_n:\C\to \C$ be given by $\phi_n(z) = \text{sinc}(2\pi z- c_n)$ for $n\in \N$, where $\text{sinc}(z) = \frac{\sin(z)}{z}$. Then $\phi_n$ has the following properties:
\begin{itemize}
\item For every $t\in \R$,
\[|\phi_n(t+ijb)| = \frac{|\sin(2\pi t-c_n+ijb)|}{\big(|2\pi t-c_n|^2 + |b|^2\big)^{\frac12}}.\]
\item For every $t\in \R$,
\begin{align}\label{eq:phinest}
\frac{\cosh(b)\, |\sin(2\pi t-c_n)|}{\big(|t-c_n|^2 + |b|^2\big)^{\frac12}}\leq |\phi_n(t+ijb)| \leq \frac{2\cosh(b)}{\big(|2\pi t-c_n|^2 + |b|^2\big)^{\frac12}}.
\end{align}
\item For every $t\in \R$,
\[\phi_n( t+ijb) = \F(x\mapsto e^{i c_n x + jbx}\one_{(-1,1)})(t).\]
In particular, setting $b=0$ we see that the functions $(2^{-1/2}\phi_n)_{n\geq 1}$ form an orthonormal system in $L^2(-1,1)$.
\end{itemize}

{\em Step 2}: To derive type $p$ it suffices to consider $p\in (1, 2]$. Let $c_n = 3\pi 2^{n}$ for $n\in \N$.
To prove (1) note that by Lemma \ref{lem:analyticYlemma}, the assumption implies that for all $f$ as in (2),
\[\|f\|_{\gamma(\R;X)}\lesssim \sum_{j\in \{-1,1\}} \Big(\int_\R \|f(t+i j b)\|^p \, dt\Big)^{\frac1p}.\]
Fix $x_1, \ldots, x_N\in X$. Letting  $f(t) = \sum_{n=1}^N \phi_n(t) x_n$ by the orthogonality of the $\phi_n$'s  we find
\begin{align}\label{eq:gammafsum}
\|f\|_{\gamma(\R;X)} = 2^{1/2}\Big\|\sum_{n=1}^N \gamma_n x_n\Big\|_{L^2(\Omega;X)}.
\end{align}
It now suffices to show that $\|f(\cdot+i j b)\|^p_{L^p(\R;X)}\lesssim \sum_{n=1}^N \|x_n\|^p$.
In the proof we use the following estimate for scalars $t:=(t_n)_{n=1}^N$ and $s:=(s_n)_{n=1}^N$:
\begin{equation}\label{eq:Holderextended}
\sum_{n=1}^\infty |s_n t_n| \leq \|s\|_{\ell^p} \|t\|_{\ell^{p'}} \leq \|s\|_{\ell^p} \|t\|_{\ell^{p}},
\end{equation}
where the last estimate holds as $p'\geq p$.

Let $a_n = 2^{n}$, so that $c_n$ is the midpoint of $[2\pi a_n, 2\pi a_{n+1}]$ for $1\leq n\leq N$. We now split the $L^p$-norm of $f(\cdot+ijb)$ as
\begin{align*}
\|f(\cdot+i j b)\|^p_{L^p(-\infty, a_1;X)} + \|f(\cdot+i j b)\|^p_{L^p(a_{N+1}, \infty;X)}+ \sum_{m=1}^{N} \|f(\cdot+i j b)\|^p_{L^p(a_m,a_{m+1};X)}.
\end{align*}
To finish the proof we estimate each of the three terms separately. Since $\|x+y\|^p\leq 2^{p-1} (\|x\|^p + \|y\|^p)$, for $1\leq m\leq N$ we can write
\begin{align*}
&\|f(\cdot+i j b)\|^p_{L^p(a_m,a_{m+1};X)}
\\ & \lesssim \int_{a_m}^{a_{m+1}} \|x_m\|^p |\phi_m(t+ ijb)|^p + \Big(\sum_{n\neq m} \|x_n\| |\phi_n(t+ ijb)| \Big)^p \, dt
\\ & \stackrel{(i)}{\lesssim} \|x_m\|^p  + \int_{a_m}^{a_{m+1}} \sum_{n\neq m} \|x_n\|^p   \cdot \sum_{n\neq m} |\phi_n(t+ ijb)|^{p}  \, dt
\\ & =  \|x_m\|^p  + \sum_{n\neq m} \|x_n\|^p  \sum_{n\neq m} \int_{a_m}^{a_{m+1}} |\phi_n(t+ ijb)|^{p}  \, dt,
\end{align*}
where we used $\sup_{m}\int_{\R} |\phi_m(t+ijb)|^p\, dt <\infty$ and \eqref{eq:Holderextended} in (i). For the terms $1\leq n\leq m-1$ we can use \eqref{eq:phinest} in order to get
\begin{equation}\label{eq:helpterm1}
\begin{aligned}
\sum_{n=1}^{m-1} \int_{a_m}^{a_{m+1}} |\phi_n(t+ ijb)|^{p}  \, dt & \lesssim \frac{(m-1)2^m }{\big(|2\pi a_m -c_{m-1}|^2 + |b|^2\big)^{\frac{p}{2}}}
\\ & \lesssim \frac{(m-1) 2^m }{|2^{m+1} - 3 \cdot 2^{m-1}|^p} \lesssim (m-1) 2^{-m(p-1)}.
\end{aligned}
\end{equation}
For the terms with $n\geq m+1$, we have
\begin{equation}\label{eq:helpterm2}
\begin{aligned}
\sum_{n=m+1}^{N} \int_{a_m}^{a_{m+1}} |\phi_n(t+ ijb)|^{p}  \, dt & \lesssim
\sum_{n=m+1}^{N} \frac{2^m }{\big(|c_{n} - 2\pi a_{m+1}|^2 + |b|^2\big)^{\frac{p}{2}}}
\\ & \lesssim \sum_{n=m+1}^\infty \frac{2^{-m(p-1)} }{|3\cdot 2^{n-m-1} - 2|^p}\lesssim 2^{-m(p-1)}.
\end{aligned}
\end{equation}
As the right-handsides of \eqref{eq:helpterm1} and \eqref{eq:helpterm2} are summable in $m$ we can conclude
\[\sum_{m=1}^N \|f(\cdot+i j b)\|^p_{L^p(a_m,a_{m+1};X)} \lesssim \sum_{m=1}^N \Big(\|x_m\|^p  + 2^{-m(p-1)} \sum_{n\neq m} \|x_n\|^p\Big) \lesssim  \sum_{m=1}^N \|x_m\|^p.\]

Next we estimate the $L^p$-norm on $(-\infty, a_1)$. In order to do so note that again by \eqref{eq:Holderextended} and \eqref{eq:phinest}
\begin{align*}
\|f(\cdot+i j b)\|^p_{L^p(-\infty, a_1;X)} & \leq \int_{-\infty}^{a_1} \sum_{n=1}^N \|x_n\|^p  \cdot \sum_{n=1}^N |\phi_n(t+ ijb)|^{p}  \, dt
\\ & \leq \sum_{n=1}^N \|x_n\|^p  \sum_{n=1}^N   \int_{-\infty}^{a_1} |\phi_n(t+ ijb)|^{p}  \, dt
\\ & \lesssim \sum_{n=1}^N \|x_n\|^p  \sum_{n=1}^N   \int_{-\infty}^{a_1} \frac{1}{|c_n- 2\pi t|^p} \, dt
\\ & \lesssim \sum_{n=1}^N \|x_n\|^p  \sum_{n=1}^N   \frac{1}{|3\cdot 2^n - 2|^{p-1}}
\lesssim \sum_{n=1}^N \|x_n\|^p
\end{align*}
since $p>1$. The estimate for the $L^p$-norm on $(a_{N+1}, \infty)$ is proved similarly.
\end{proof}

\begin{proof}[Proof of Theorem \ref{thm:gammaanalyticcotype} $(2)\Rightarrow (1)$]
Let $\phi_n$ be as in Step 1 of the previous proof, but this time with $c_n = 3\pi r^{n}$, where $r>2$ is an even integer which is fixed for the moment. By Lemma \ref{lem:analyticYlemma}, the assumption implies that for all $f$ as in (2)
\[ \Big(\int_\R \|f(t)\|^q \, dt\Big)^{\frac1q} \lesssim \sum_{j\in \{-1,1\}} \|f(t + ijb)\|_{\gamma(\R,dt;X)}.\]
Let $f(t) = \sum_{n=1}^N \phi_n(t) x_n$.
As the Fourier transform is an isometry on $\gamma(\R;X)$ and $\hat{f}$ has support on $[-1,1]$, the ideal property yields
\begin{align*}
\|f(\cdot+ijb)\|_{\gamma(\R;X)} & = \|\F f(\cdot+ijb)\|_{\gamma(\R;X)} = \|e^{jb x} \hat{f}(x)\|_{\gamma(\R;X)}
\\ & \leq e^{b} \|\hat{f}\|_{\gamma(-1,1;X)} = 2^{1/2} e^b \Big\|\sum_{n=1}^N \gamma_n x_n\Big\|_{L^2(\Omega;X)},
\end{align*}
where we used that $(2^{-1/2}\hat{\phi}_n)_{n=1}^N$ is an orthonormal system. Let $a_n = r^{n}$, so that $c_n$ is in the interval $[2\pi a_n, 4\pi a_{n}]$ for $1\leq n\leq N$. Clearly, we may estimate
\begin{align*}
\|f\|^q_{L^q(\R;X)} & \geq \sum_{m=1}^{N-1} \int_{a_m}^{2a_{m}} \|f(t)\|^q\, dt.
\end{align*}
Now since $\|x+y\|^q\geq \big|\|x\|-\|y\|\big|^q\geq 2^{1-q}\|x\|^q - \|y\|^q$, we find
\begin{align*}
\int_{a_m}^{2a_{m}} \|f(t)\|^q\, dt & \geq \int_{a_m}^{2a_{m}} 2^{1-q} \|x_m\|^q |\phi_m(t)|^q - \Big\|\sum_{n\neq m} x_n \phi_n(t)\Big\|^q\, dt.
\end{align*}
Note that to obtain a lower estimate of the latter, we have to be a careful with the minus sign. Clearly,
\begin{align*}
\int_{a_m}^{2a_{m}} 2^{1-q} \|x_m\|^q |\phi_m(t)|^q \geq 2^{1-q} \|x_m\|^q  \int_{-1}^{1} \frac{|\sin(2\pi t)|^q}{|2\pi t|^q}  \, dt \geq C_q \|x_m\|^q,
\end{align*}
and the latter is not depending on $r$.
By H\"older's inequality
\begin{align*}
\int_{a_m}^{2a_{m}}  \Big(\sum_{n\neq m} \|x_n\| |\phi_n(t)|\Big)^q\, dt& \leq \int_{a_m}^{2a_{m}}  \sum_{n\neq m} \|x_n\|^q \Big(\sum_{n\neq m} |\phi_n(t)|^{q'}\Big)^{q-1}\, dt
\\ & \leq \sum_{n\neq m} \|x_n\|^q  \int_{a_m}^{2a_{m}}  \Big(\sum_{n\neq m} |\phi_n(t)|^{q'}\Big)^{q-1}\, dt.
\end{align*}
Using convexity the latter integral can be estimated by
\begin{align*}
2^{q-2} \int_{a_m}^{2a_{m}}  \Big(\sum_{n=1}^{m-1} |\phi_n(t)|^{q'}\Big)^{q-1}\, dt + 2^{q-2} \int_{a_m}^{2a_{m}}  \Big(\sum_{n> m} |\phi_n(t)|^{q'}\Big)^{q-1}\, dt.
\end{align*}
The first term we estimate by using \eqref{eq:phinest} and the worst case times $(m-1)^{q-1}$:
\begin{align*}
\int_{a_m}^{2a_{m}}  \Big(\sum_{n=1}^{m-1} |\phi_n(t)|^{q'}\Big)^{q-1}\, dt
&  \leq C_{q,b}\int_{a_m}^{2a_{m}}  (m-1)^{q-1}  \frac{1}{|2\pi t-c_{m-1}|^q} \, dt
\\ & \leq C_{q,b} (m-1)^{q-1} \frac{r^{m}}{|\pi(2 r^m - 3 r^{m-1})|^{q} } .
\\ & = \frac{C_{q,b}}{\pi^q} (m-1)^{q-1} r^{-m(q-1)}\frac{1}{|(2  - 3/r)|^{q} }=:T_{1,m}(r),
\end{align*}
where $C_{q,b} = 2^q \cosh^q(b)$. The second term can be estimated as
\begin{align*}
\int_{a_m}^{2a_{m}}  \Big(\sum_{n> m} |\phi_n(t)|^{q'}\Big)^{q-1}\, dt
&\leq C_{q,b}\int_{a_m}^{2a_{m}}  \Big(\sum_{k=1}^{\infty}  \frac{1}{|c_{m+k} - 2\pi t|^{q'}}\Big)^{q-1}\, dt
\\ &\leq C_{q,b} r^m r^{-m q }  \Big(\sum_{k=1}^{\infty}  \frac{1}{|2\pi r^k - 4\pi |^{q'}}\Big)^{q-1}\, dt
=: T_{2,m}(r).
\end{align*}
Therefore, summing over $1\leq m\leq N$ yields
\[\|f\|^q_{L^q(\R;X)} \geq \Big(C_q - 2^{q-2} \sum_{m\geq 1} T_{1, m}(r) + T_{2,m}(r)\Big) \sum_{m=1}^N \|x_m\|^q.\]
The latter series converges and moreover by the dominated convergence theorem $\sum_{m\geq 1} T_{1, m}(r) + T_{2,m}(r) \to 0$ as $r\to \infty$. Now by letting $r\to \infty$ and combining all estimates it follows that
\[\Big\|\sum_{n=1}^N \gamma_n x_n\Big\|_{L^2(\Omega;X)} \leq C \Big(\sum_{m=1}^N \|x_m\|^q)^{\frac1q}.\]
\end{proof}

\begin{remark}
The proofs in the previous sections are all based on the mean value property. Therefore, it should be possible to extend all results to the setting of harmonic functions $f:D\to X$ where $D\subset \R^d$. For the applications we have in mind, one needs estimates for holomorphic functions $f$, and therefore we only consider this setting.
\end{remark}

\subsection{An alternative method\label{sec:embeddinggeneral}}

In Theorems \ref{thm:gammaanalytictype} and \ref{thm:gammaanalyticcotype} we have seen an embedding result for $\gamma$-norms and $L^p$-norms of holomorphic functions under the assumptions that $X$ has cotype $q$. Embeddings of this type for functions which are not necessarily holomorphic but in suitable Sobolev spaces, have been obtained in \cite{KNVW}. Below we give a simple proof of $W^{1,p}(\R;X)\hookrightarrow \gamma(\R;X)$ and $\gamma^1(\R;X)\hookrightarrow L^q(\R)$ under the assumption that $X$ has type $p$ and cotype $q$. Combining the above result with Lemma \ref{lem:analyticYlemma}, yields another proof of the implications (1)$\Rightarrow$ (2) in Theorems \ref{thm:gammaanalytictype} and \ref{thm:gammaanalyticcotype}.

\begin{proposition}\label{prop:embeddingtypegamma}
Let $X$ be a Banach space, $p\in [1, 2]$ and $q\in [2, \infty]$. The following assertions hold:
\begin{enumerate}
\item The space $X$ has type $p$ if and only if
there is a constant $C$ such that for every $u\in W^{1,p}(I;X)$,
\[\|u\|_{\gamma(\R;X)} \leq C \|u\|_{L^p(\R;X)} + \|u'\|_{L^p(\R;X)}.\]
\item The space $X$ has with cotype $q$ if and only if
there is a constant $C$ such that for every $u\in \gamma^1(I;X)$,
\[\|u\|_{L^q(\R;X)} \leq C \|u\|_{\gamma(\R;X)} + \|u'\|_{\gamma(\R;X)}.\]
\end{enumerate}
\end{proposition}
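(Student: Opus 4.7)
The plan is to prove each direction of (1) and (2) separately, with a common auxiliary inequality doing the heavy lifting in the forward directions. For the easy direction (embedding implies type/cotype), I would use a test-function argument: given $x_1,\dots,x_N \in X$, take $u(t) = \sum_{n=1}^N \phi(t-n)x_n$ where $\phi$ is a smooth bump supported in $(-\tfrac12,\tfrac12)$ with $\|\phi\|_{L^2}=1$. The translates $\phi(\cdot-n)$ form an orthonormal system with pairwise disjoint supports, so $\|u\|_{\gamma(\R;X)} = \|\sum_n \gamma_n x_n\|_{L^2(\Omega;X)}$ straight from the paper's definition, while $\|u\|_{L^p(\R;X)}^p + \|u'\|_{L^p(\R;X)}^p = (\|\phi\|_{L^p}^p + \|\phi'\|_{L^p}^p)\sum_n \|x_n\|^p$. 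The assumed embedding then yields the defining inequality for type $p$; the cotype case is analogous, with the roles of $\gamma$ and $L^q$ swapped.

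The forward direction in both (1) and (2) will rest on a disjoint-support lemma that I would establish first: if $(I_n)$ is a countable disjoint family of intervals and $v_n \in \gamma(I_n;X)$ (extended by zero), then type $p$ of $X$ gives $\|\sum_n v_n\|_{\gamma(\R;X)} \lesssim (\sum_n \|v_n\|_{\gamma(I_n;X)}^p)^{1/p}$, and cotype $q$ of $X$ gives $(\sum_n \|v_n\|_{\gamma(I_n;X)}^q)^{1/q} \lesssim \|\sum_n v_n\|_{\gamma(\R;X)}$ (with the obvious modification for $q=\infty$). To prove this, expand each $v_n = \sum_k h_{n,k}x_{n,k}$ along an orthonormal basis of $L^2(I_n)$; disjointness makes the pooled family $\{h_{n,k}\}_{n,k}$ orthonormal in $L^2(\R)$. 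Setting $S_n = \sum_k \gamma_{n,k} x_{n,k}$, these are independent $X$-valued Gaussians with $\|S_n\|_{L^2(\Omega;X)} = \|v_n\|_{\gamma(I_n;X)}$ and $\|\sum_n S_n\|_{L^2(\Omega;X)} = \|\sum_n v_n\|_{\gamma(\R;X)}$. Conditioning on the $\gamma$'s and then inserting an independent Rademacher sequence $(\eps_n)$ leaves $\sum_n S_n$ unchanged in distribution; Rademacher type $p$ (respectively, cotype $q$) applied conditionally, followed by Fubini and the Kahane--Khintchine equivalence of $L^r$ moments of Gaussian sums, yields both inequalities.

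With this lemma in hand, the forward direction of (1) proceeds by splitting $u = \tilde u + r$, where $\tilde u(t) = \sum_n u(n+1)\one_{[n,n+1)}(t)$ and $r_n := r\one_{[n,n+1)}$ satisfies $r_n(t) = -\int_t^{n+1} u'(s)\,ds$ with $r_n(n+1)=0$ and $r_n'(t)=u'(t)$ on $[n,n+1)$. The piecewise-constant part has $\|\tilde u\|_{\gamma(\R;X)} = \|\sum_n \gamma_n u(n+1)\|_{L^2(\Omega;X)}$, controlled by type $p$ and the standard Sobolev trace $\|u(n+1)\|^p \lesssim \int_n^{n+1}(\|u\|^p + \|u'\|^p)\,dt$, giving $\|\tilde u\|_{\gamma(\R;X)} \lesssim \|u\|_{L^p(\R;X)} + \|u'\|_{L^p(\R;X)}$. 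For the remainder, Lemma \ref{lem:estgammastandard}(1) yields $\|r_n\|_{\gamma(I_n;X)} \leq \int_n^{n+1}\|u'(t)\|\,dt$, and the disjoint-support lemma combined with Jensen's inequality then gives $\|r\|_{\gamma(\R;X)} \lesssim \|u'\|_{L^p(\R;X)}$. The forward direction of (2) is cleaner still: Lemma \ref{lem:estgammastandard}(2) on $[n,n+1]$ provides $\|u\|_{L^q(I_n;X)} \leq \sup_{t \in I_n}\|u(t)\| \lesssim \|u\|_{\gamma(I_n;X)} + \|u'\|_{\gamma(I_n;X)}$, and summing $q$-th powers (or taking supremum for $q=\infty$) and applying the cotype version of the disjoint-support lemma to $u$ and $u'$ separately delivers the desired estimate.

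The main obstacle will be the disjoint-support lemma --- in particular, arranging the symmetrization so that Rademacher type/cotype acts correctly on the independent Gaussian vectors $(S_n)$, and then using Kahane--Khintchine to pass from $L^p$ (or $L^q$) moments back to the $L^2$ moment that defines the $\gamma$-norm. Once the lemma is in place, the rest is bookkeeping with Sobolev traces and direct applications of Lemma \ref{lem:estgammastandard}.
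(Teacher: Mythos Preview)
Your proposal is correct and follows essentially the same strategy as the paper: both proofs rest on the disjoint-support randomization lemma (what the paper states as Lemma~\ref{lem:disjointtype}, citing \cite{KNVW}) combined with local Sobolev-type estimates on unit intervals, and both handle the converse directions via the same test-function argument with disjointly supported translates. The bookkeeping differs slightly: for (1) the paper bounds $\|u\|_{\gamma(j,j+1;X)}$ directly via the integral representation \eqref{eq:urepresen} rather than splitting $u=\tilde u+r$, and for (2) the paper uses the pointwise bound $\|u(t)\|\le\|u\|_{\gamma(t,t+1;X)}+\|u'\|_{\gamma(t,t+1;X)}$ with a \emph{moving} window, which forces an overlapping-interval trick ($I_n=(2n,2n+2)$, $J_n=(2n+1,2n+3)$); your use of Lemma~\ref{lem:estgammastandard}(2) on the fixed intervals $[n,n+1]$ sidesteps that and is a bit cleaner.
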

The main ingredient in the proof is a simple randomization lemma taken from \cite[Lemma 2.2]{KNVW}.
\begin{lemma}\label{lem:disjointtype}
Let $X$ be a Banach space with type $p$ and cotype $q\in [2, \infty)$. Let $(S, \Sigma, \mu)$ be a measure space.
\begin{enumerate}[$(1)$]
\item Let $(S_n)_{n\geq 1}$ in $\Sigma$ be a sequence of disjoint sets. Then for every $f\in \g(S;X)$,
\[\Big(\sum_{n\geq 1} \|f\|_{\gamma(S_n;X)}^q\Big)^{\frac1q} \leq  C_{q,X} \|f\|_{\gamma(S;X)}.\]
\item Let $(S_n)_{n\geq 1}$ in $\Sigma$ be a sequence of disjoint sets such that $\bigcup_{n\geq 1} S_n = S$. If $f\in \gamma(S_n;X)$ for each $n\geq 1$ and $\Big(\sum_{n\geq 1}\|f\|_{\gamma(S_n;X)}^p\Big)^{\frac1p}<\infty$, then $f\in \gamma(S;X)$ and \[ \|f\|_{\gamma(S;X)}\leq C_{p,X}\Big(\sum_{n\geq 1} \|f\|_{\gamma(S_n;X)}^p\Big)^{\frac1p}<\infty.\]
\end{enumerate}
\end{lemma}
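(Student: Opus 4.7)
The plan is to realize each restriction norm $\|f\|_{\gamma(S_n;X)}$ as the $L^2(\Omega;X)$-norm of an independent $X$-valued Gaussian random variable, and to transfer the Rademacher-based formulation of type/cotype to this Gaussian setting. Concretely, for each $n \geq 1$ fix an orthonormal basis $(h_{n,k})_{k\geq 1}$ of $L^2(S_n)$ and set
\[Z_n := \sum_{k\geq 1} \gamma_{n,k}\, I_f h_{n,k},\]
with $(\gamma_{n,k})_{n,k}$ independent standard Gaussians. Since the $S_n$ are disjoint, $(h_{n,k})_{n,k}$ is an orthonormal system in $L^2(S)$, so the $Z_n$ are independent centered Gaussian $X$-valued random variables with $\|Z_n\|_{L^2(\Omega;X)} = \|f\|_{\gamma(S_n;X)}$. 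Whenever the Gaussian series $\sum_n Z_n$ converges in $L^2(\Omega;X)$, the ideal property (Facts (c)) gives
\[\Big\|\sum_{n\geq 1} Z_n\Big\|_{L^2(\Omega;X)} = \|f\|_{\gamma(\bigcup_n S_n;X)} \leq \|f\|_{\gamma(S;X)},\]
with equality if $\bigcup_n S_n = S$.

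For (1), let $(\epsilon_n)$ be an independent Rademacher sequence and apply cotype $q$ pointwise in $\omega$:
\[\sum_n \|Z_n(\omega)\|^q \leq C_q^q\, \Big(\E_\epsilon\Big\|\sum_n \epsilon_n Z_n(\omega)\Big\|^2\Big)^{q/2}.\]
Integrate over $\omega$; Jensen's inequality (convexity of $t\mapsto t^{q/2}$ for $q\geq 2$) bounds the right-hand side by $C_q^q\,\E_{\omega,\epsilon}\|\sum_n \epsilon_n Z_n\|^q$. Since the $Z_n$ are independent and symmetric, $(\epsilon_n Z_n)_n \stackrel{d}{=} (Z_n)_n$ jointly, so this quantity equals $C_q^q\,\E_\omega\|\sum_n Z_n\|^q$. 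As $\sum_n Z_n$ is an $X$-valued Gaussian, Kahane--Khintchine gives $\E\|\sum_n Z_n\|^q \leq K_q^q\|\sum_n Z_n\|_{L^2(\Omega;X)}^q$, while Jensen provides $\|Z_n\|_{L^2}^q \leq \E\|Z_n\|^q$ on the left. Chaining the estimates yields (1).

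For (2), apply the same symmetry identity in reverse to obtain $\E_\omega\|\sum_n Z_n\|^p = \E_{\omega,\epsilon}\|\sum_n \epsilon_n Z_n\|^p$, then invoke type $p$ in the $\epsilon$-variable (conditional on $\omega$) followed by Fubini:
\[\E_\omega\Big\|\sum_n Z_n\Big\|^p \leq \tau^p\sum_n\E\|Z_n\|^p.\]
Kahane--Khintchine on the Gaussian $\sum_n Z_n$ gives $\|\sum_n Z_n\|_{L^2}^p \leq K_p^p\,\E\|\sum_n Z_n\|^p$, and Jensen gives $\E\|Z_n\|^p \leq \|Z_n\|_{L^2}^p = \|f\|_{\gamma(S_n;X)}^p$. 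Combining, for every $N$,
\[\Big\|\sum_{n=1}^N Z_n\Big\|_{L^2(\Omega;X)} \leq C'\Big(\sum_{n=1}^N \|f\|_{\gamma(S_n;X)}^p\Big)^{1/p}.\]
Applied to tails $\sum_{n=M}^N Z_n$, this shows the partial sums form a Cauchy sequence in $L^2(\Omega;X)$, so the Gaussian series converges and $f\in\gamma(S;X)$ with the stated estimate.

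The main technical friction is the passage between the Rademacher formulation of type/cotype and the Gaussian nature of $\gamma$-norms; it is bridged by the joint-symmetry identity $(\epsilon_n Z_n)_n \stackrel{d}{=} (Z_n)_n$ together with Kahane--Khintchine's equivalence of $L^p$-norms for Rademacher and Gaussian sums, which allows all computations to be carried out in a single common $L^p$.
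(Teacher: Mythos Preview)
The paper does not actually prove this lemma; it simply cites it as \cite[Lemma 2.2]{KNVW}. Your argument is a correct reconstruction of the standard randomization proof: represent the restriction norms as $L^2$-norms of independent $X$-valued Gaussians $Z_n$, use the symmetry identity $(\epsilon_n Z_n)_n \stackrel{d}{=} (Z_n)_n$ to pass between the Rademacher and Gaussian settings, and close with Kahane--Khintchine and Jensen. All the inequalities go in the right direction, and your handling of convergence in part~(2) via the Cauchy criterion on tails is the correct way to conclude $f\in\gamma(S;X)$.

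One minor stylistic remark: in part~(1) you take a slightly roundabout path by first invoking cotype with the $L^2$-moment on the right and then using Jensen to upgrade to the $L^q$-moment. It is cleaner to apply the cotype inequality directly in its $L^q$-form (as stated in the paper), which makes the Jensen step unnecessary; but what you wrote is of course equivalent up to Kahane--Khintchine constants. Similarly, in part~(2) the a~priori convergence of each $Z_n$ uses that $f\in\gamma(S_n;X)$, which you assume, so there is no issue.
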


\begin{proof}[Proof of Proposition \ref{prop:embeddingtypegamma}]

(2): \
Assume $X$ has cotype $q\in [2, \infty)$. Let $u\in \gamma^1(\R_+;X)$. By writing $u$ as the integral of its derivative one sees that $u$ has a continuous version (see \cite{NVWgamma}).
Let $\varphi:\R\to \R$ be given by $\varphi(x) = 1-x$ for $x\in [0,1]$ and $\varphi(x) = 0$ for $x>1$.
For each $t\in \R$, we can write
\begin{align}\label{eq:urepresen}
u(t) = \int_{t}^{t+1} \varphi(x-t) u'(x) \, dx + \int_{t}^{t+1} \varphi'(x-t) u(x) \, dx.
\end{align}
Since $\|\varphi\|_{L^2(0,1)} \leq 1$ and $\|\varphi'\|_{L^2(0,1)} \leq 1$, we find
\[\|u(t)\|\leq \|u'\|_{\gamma(t, t+1;X)} + \|u\|_{\gamma(t, t+1;X)}.\]
Taking $q$-th powers on both sides and integrating over $t\in \R$, we find that
\begin{align*}
\|u\|_{L^q(\R;X)} \leq \big\|t\mapsto \|u'\|_{\gamma(t, t+1;X)}\big\|_{L^q(\R;X)} + \big\|t\mapsto \|u\|_{\gamma(t, t+1;X)}\big\|_{L^q(\R;X)}.
\end{align*}
By Lemma \ref{lem:disjointtype} with $I_n = (2n, 2n+2)$ and $J_n = (2n+1, 2n+3)$ for $n\in \Z$,
\begin{align*}
\big\|t\mapsto \|u\|_{\gamma(t, t+1;X)}\big\|_{L^q(\R;X)}
& \leq \Big(\sum_{j\in \Z} \int_{j}^{j+1} \|u\|_{\gamma(t, t+1;X)}^q \, dt\Big)^{\frac1q}
\\ & \leq \Big(\sum_{j=0}^\infty \|u\|_{\gamma(j, j+2;X)}^q\Big)^{\frac1q}
\\ & \leq \Big(\sum_{n\in \Z}^\infty \|u\|_{\gamma(I_n;X)}^q\Big)^{\frac1q} + \Big(\sum_{n=0}^\infty \|u\|_{\gamma(J_n;X)}^q\Big)^{\frac1q}
\\ & \leq 2 C_{q,x} \|u\|_{\gamma(\R;X)}^q.
\end{align*}
Since the term $u'$ can be estimated in exactly the same way we find
\begin{align}\label{eq:uLqnormgamma}
\|u\|_{L^q(\R_+;X)}  \leq 4 C_{q,x} \|u\|_{\gamma^1(\R_+;X)}.
\end{align}

To prove the converse it suffices to apply $\|u\|_{L^q(\R_+;X)}\leq C \|u\|_{\gamma^{1}(\R_+;X)}$ to the function
$u = \sum_{j=1}^n \varphi_j x_j$, where $\varphi:\R\to \R$ is given by $\varphi(x) = 1-2|x|$ for $x\in [-1/2, 1/2]$ and zero otherwise and $\varphi_j(t)  = \varphi(t-j)$

(1): \  First assume $X$ has type $p$. Taking $\gamma$-norms for $t\in (j, j+1)$ in \eqref{eq:urepresen} we find
\begin{align*}
\|u\|_{\gamma(j, j+1;X)} & \leq \int_{j}^{j+2} \|\varphi\|_{L^2(t, t+1)} \|u'(x)\| + \|\varphi'\|_{L^2(t, t+1)} \|u(x)\| \, dx
\\ & \lesssim \|u\|_{W^{1,1}(j, j+1;X)} + \|u\|_{W^{1,1}(j+1, j+2;X)}.
\end{align*}
Taking $p$-th powers and summing over all $j\in \Z$, and applying Lemma \ref{lem:disjointtype} we find
\begin{align}\label{eq:ugammaestWW}
\|u\|_{\gamma(\R;X)} & \leq C_{p,X} \Big(\sum_{j\in \Z} \|u\|_{W^{1,1}(j, j+1;X)}^p\Big)^{\frac1p}\leq \|u\|_{W^{1,p}(\R;X)}.
\end{align}
To see that the embedding implies type $p$, one can take $u = \sum_{j=1}^n \varphi_j x_j$, where $\varphi_j$ is as before.
\end{proof}

\section{Applications}

In this section we present two applications of Theorems \ref{thm:gammaanalytictype} and \ref{thm:gammaanalyticcotype}.
Before doing so we briefly introduce the so-called $H^\infty$-calculus of a sectorial operator $A$.

\subsection{Preliminaries on $H^\infty$-calculus}

For details on the $H^\infty$-calculus we refer the reader to \cite{Haase:2,KunWeis04}. We repeat the part of the theory which we need below.

Let $\sigma\in (0,\pi)$. A linear operator $(A, D(A))$ on $X$ is called {\em sectorial of type $\sigma$} if $D(A)$ is dense in $X$, $A$ is injective and has dense range, $\sigma(A)\subseteq \overline{\Sigma_\sigma}$,
and for all $\sigma'\in(\sigma,\pi)$ the set
$$ \big\{z(z-A)^{-1}: \ z\in \C\setminus\{0\}, \  |\arg(z)|> \sigma'\big\}$$
is uniformly bounded.

Recall that $H^\infty(\Sigma_{\sigma})$ stands for the space of bounded holomorphic functions on $\Sigma_{\sigma}$. Examples of operators with an $H^\infty$-calculus include most differential operators on $L^p$-spaces with $p\in (1, \infty)$.
Below the $H^\infty$-calculus of $A$ will be applied through square function estimates. Recall that $v\in H^\infty_0(\Sigma_{\sigma})$ if it is in $H^\infty(\Sigma_{\sigma})$ and there exists an $\varepsilon>0$ such that $|v(z)|\leq \frac{|z|^{\varepsilon}}{1+|z|^{2\varepsilon}}$ for $z\in \Sigma_{\sigma}$. For a sectorial operator $A$ of angle $<\sigma$ and $v\in H^\infty_0(\Sigma_{\sigma})$, we can define $v(tA)$ using the Dunford calculus. The operator $A$ is said to have a {\em bounded $H^\infty$-calculus of angle $\sigma$} if $\|v(A)x\|\leq C\|v\|_{\infty} \|x\|$ for all such $v$, where $C$ is independent of $v$.

Assume that $A$ has a bounded $H^\infty$-calculus of angle $\sigma$. Assume $v\in H^\infty_0(\Sigma_{\sigma})$ is nonzero. Then
\begin{equation}\label{eq:lowerHinfty}
\|x\|\leq C\|t\mapsto v(tA) x\|_{\gamma(\R_+,\tfrac{dt}{t};X)},
\end{equation}
where $x\in X$ is such that the right-handside is finite. Moreover,
\begin{equation}\label{eq:upperHinfty}
\|t\mapsto v(tA) x\|_{\gamma(\R_+,\tfrac{dt}{t};X)}\leq  C\|x\|,  \ \ x\in X
\end{equation}
provided $X$ has finite cotype. Moreover, if the above estimates hold for some nonzero $v\in H^\infty_0(\Sigma_{\sigma})$ with $A$ replaced by $e^{\pm \phi i}A$ for some $\phi>0$ and all $x$ in a dense subspace of $X$, then one can also deduce that $A$ has a bounded $H^\infty$-calculus. Proofs of these results can be found in \cite{HaHa} and \cite[Section 7]{KalW04}.

\subsection{Littlewood-Paley-Stein $g$-functions}

In \cite{Stein:topics} continuous Littlewood--Paley estimates have been introduced which now are usually referred to as Littlewood-Paley-Stein $g$-functions. In \cite{Xu98} one-sided $L^p$-estimates for these $g$-functions are studied in a Banach space setting for the Poisson semigroup and in \cite{MTX06} for more general diffusion semigroups. It turns out that such $L^p$-estimates are equivalent to martingale type and cotype of the underlying Banach space. For similar results in the Laguerre setting we refer to \cite{BFRMST11}.

In \cite{Hyt07}, \cite{KaiWei08} and \cite{KalW04} the continuous square functions from \cite{Stein:topics} are generalized to the Banach space valued situation in different ways using $\gamma$-norms (also see \cite{BCCFRM12,betancor2012umd}).

Below we will combine Theorems \ref{thm:gammaanalytictype} and \ref{thm:gammaanalyticcotype} and \eqref{eq:lowerHinfty} and \eqref{eq:upperHinfty} to obtain $L^p$-estimates for certain classes of diffusion operators. This leads to a different approach to the estimates obtained in \cite{MTX06} and it is applicable to a wider class of operators.

We start with the following general Littlewood-Paley-Stein inequality.
\begin{proposition}\label{prop:generalPL}
Assume $X$ has type $p\in [1, 2]$ and cotype $q\in [2, \infty]$ and assume that $A$ is sectorial and has a bounded $H^\infty$-calculus of angle $<\phi$. Fix a nonzero $f\in H^{\infty}_0(\Sigma_{\phi})$. Then for all $x\in X$,
\begin{equation}\label{eq:LpLqest}
\|t\mapsto f(tA)x\|_{L^q(\R_+,\frac{dt}{t};X)} \lesssim \|x\|\lesssim \|t\mapsto f(tA)x\|_{L^p(\R_+,\frac{dt}{t};X)},
\end{equation}
where the second estimate holds whenever the right-hand  side is finite.
\end{proposition}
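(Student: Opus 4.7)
The plan is to apply the embedding Theorems \ref{thm:gammaanalytictype} and \ref{thm:gammaanalyticcotype} to the holomorphic function $F\colon \Sigma_\alpha\to X$ defined by $F(z)=f(zA)x$, in combination with the $H^\infty$-calculus norm equivalences \eqref{eq:lowerHinfty}--\eqref{eq:upperHinfty}. I first fix $\sigma\in(0,\phi)$ such that $A$ has a bounded $H^\infty$-calculus of angle $\sigma$, and choose $\alpha\in(\sigma,\phi)$; then $F$ is bounded and holomorphic on $\Sigma_{\alpha-\sigma}$. The key observation is that for every rotation $|\theta|<\phi-\sigma$ the symbol $f_\theta(w):=f(e^{i\theta}w)$ is a nonzero member of $H^\infty_0(\Sigma_{\phi-|\theta|})$, satisfies $F(e^{i\theta}t)=f_\theta(tA)x$, and both \eqref{eq:lowerHinfty} and \eqref{eq:upperHinfty} apply to each $f_\theta$ in place of $f$.

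For the $L^q$ upper bound I apply Theorem \ref{thm:gammaanalyticcotype}(3) to $F$ with inner parameter $a=0$ and outer parameter $b\in(0,\alpha-\sigma)$; this controls $\|t\mapsto f(tA)x\|_{L^q(\R_+,dt/t;X)}$ by $\sum_{j\in\{-1,1\}}\|t\mapsto f_{jb}(tA)x\|_{\gamma(\R_+,dt/t;X)}$. Since cotype $q<\infty$ entails finite cotype, \eqref{eq:upperHinfty} applied to each nonzero $f_{\pm b}\in H^\infty_0$ bounds this sum by a constant times $\|x\|$. The case $q=\infty$ is immediate from the uniform operator bound $\sup_{t>0}\|f(tA)\|<\infty$ provided by the bounded $H^\infty$-calculus.

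For the $L^p$ lower bound I first invoke \eqref{eq:lowerHinfty} applied to $f$ itself to pass from $\|x\|$ to $\|t\mapsto f(tA)x\|_{\gamma(\R_+,dt/t;X)}$, and then Theorem \ref{thm:gammaanalytictype}(3), again with $a=0$ and the same outer parameter $b$, to dominate this $\gamma$-norm by $\sum_j\|t\mapsto f_{jb}(tA)x\|_{L^p(\R_+,dt/t;X)}$. The essential remaining step---and the hardest part of the proof---is to replace these $L^p$-norms on the rotated rays $e^{\pm ib}\R_+$ by the single real-axis norm $\|t\mapsto f(tA)x\|_{L^p(\R_+,dt/t;X)}$ appearing in the statement. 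I plan to do this by a log-convexity (Hadamard three-lines) argument for $s\mapsto\|F(e^{is}\cdot)\|_{L^p(\R_+,dt/t;X)}$ applied in the strip obtained via $z=e^w$, interpolating the rotated-ray value between the real-axis value (assumed finite) and a value on a still wider outer ray; the outer-ray value is controlled by the same mechanism as in the upper-bound step, so that after rearrangement only a constant multiple of $\|f(tA)x\|_{L^p(\R_+,dt/t;X)}$ survives on the right-hand side.

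The final reduction from rotated-ray $L^p$-norms to the real-axis $L^p$-norm is therefore the crux of the argument, since Theorem \ref{thm:gammaanalytictype}(3) naturally produces outer-ray norms rather than real-axis norms; everything else amounts to bookkeeping combining the embedding Theorems \ref{thm:gammaanalytictype}--\ref{thm:gammaanalyticcotype}, the $H^\infty$-calculus square function equivalences \eqref{eq:lowerHinfty}--\eqref{eq:upperHinfty}, and the logarithmic change of variable between strips and sectors.
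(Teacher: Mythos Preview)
Your treatment of the $L^q$ upper bound is correct and is essentially what the paper does: Theorem~\ref{thm:gammaanalyticcotype}(3) with $a=0$ reduces matters to $\gamma$-norms along rotated rays, and these are handled by \eqref{eq:upperHinfty} applied to the rotated symbols $f_{\pm b}\in H^\infty_0$; the case $q=\infty$ is immediate.

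The gap is in the $L^p$ lower bound. After Theorem~\ref{thm:gammaanalytictype}(3) you are left with controlling
\[
\sum_{j\in\{-1,1\}}\big\|t\mapsto f(e^{ijb}tA)x\big\|_{L^p(\R_+,\frac{dt}{t};X)}
\]
by the single real-axis quantity $\|t\mapsto f(tA)x\|_{L^p(\R_+,\frac{dt}{t};X)}$. Your proposed three-lines/Poisson interpolation between the real axis and a wider ray $e^{\pm ib'}\R_+$ produces an inequality of the form
\[
\|F(e^{ib}\cdot)\|_{L^p}\ \le\ (1-\lambda)\,\|F(\cdot)\|_{L^p}\ +\ \lambda\,\|F(e^{ib'}\cdot)\|_{L^p},
\]
but you then need an a priori bound on the outer term $\|F(e^{ib'}\cdot)\|_{L^p}$. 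The ``mechanism of the upper-bound step'' you invoke only yields $L^q$ (or $\gamma$) control in terms of $\|x\|$, and since $p\le q$ with $\frac{dt}{t}$ an infinite measure, this does \emph{not} give any $L^p$ bound. Trying instead to absorb $\|F(e^{ib'}\cdot)\|_{L^p}\lesssim\|x\|$ into the left is circular: that is precisely an $L^p$ upper estimate, which is stronger than the $L^q$ one and is not available. In short, the reduction from rotated-ray $L^p$-norms to the real-axis $L^p$-norm cannot be closed by three-lines alone.

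The paper avoids this difficulty entirely by \emph{duality}. Since $X$ has type $p$, the dual $X^*$ has cotype $p'$; the already established cotype estimate, applied to $A^*$ on (the closure of $D(A^*)\cap R(A^*)$ in) $X^*$ with a companion function $g(z)=c\,\overline{f(\bar z)}$ normalized so that $\int_0^\infty f(t)g(t)\,\frac{dt}{t}=1$, gives
\[
\big\|t\mapsto g(tA^*)x^*\big\|_{L^{p'}(\R_+,\frac{dt}{t};X^*)}\ \lesssim\ \|x^*\|.
\]
Combining this with the reproducing identity $\langle x,x^*\rangle=\int_0^\infty\langle f(tA)x,\,g(tA^*)x^*\rangle\,\frac{dt}{t}$ and H\"older's inequality yields $|\langle x,x^*\rangle|\lesssim \|f(\cdot A)x\|_{L^p}\,\|x^*\|$, hence the desired lower bound. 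I recommend replacing your three-lines step by this duality argument.
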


\begin{proof}
If $q=\infty$, then it suffices to note that $\|f(tA)x\|\leq C \|x\|$. Next assume $q<\infty$.
By \eqref{eq:upperHinfty}  we find
$\|t\mapsto f(tA)x\|_{\gamma(\R_+,\frac{dt}{t};X)}\leq \|x\|$. Thus the first estimate in \eqref{eq:LpLqest} follows from Theorem \ref{thm:gammaanalyticcotype} (3) with $a=0$.

For the second estimate in \eqref{eq:LpLqest} we use a duality argument. Define $g \in H^\infty_0(\Sigma_{\sigma})$ by $g(z) = c \overline{f(\overline{z})}$ with $c = \Big(\int_0^\infty |f(t)|^2 \, \frac{dt}{t}\Big)^{-1}$. Then
\[\int_0^\infty f(t) g(t) \, \frac{dt}{t} = 1.\]
Choose $x\in X$ such that the right-hand side of \eqref{eq:LpLqest} is finite  and let $x^*\in R(A^*)\cap D(A^*)$.
Note that $A^*$ has a bounded  $H^\infty$-calculus on $\overline{R(A^*)\cap D(A^*)}$ (see \cite[Appendix A]{KunWeis04}).
Now by an approximation argument (see \cite[Theorem 5.2.6]{Haase:2})) one can show that
\[\lb x, x^*\rb = \int_0^\infty \lb f(tA)x, g(tA)^*x^* \rb \, \frac{dt}{t}.\]
By H\"older's inequality and the fact that $X^*$ has cotype $p'$ (see \cite[Proposition 11.10]{DJT}) we find
\begin{align*}
|\lb x, x^*\rb|& \leq \|t\mapsto f(tA)x\|_{L^p(\R_+,\frac{dt}{t};X)} \|t\mapsto g(tA^*)x^*\|_{L^{p'}(\R_+,\frac{dt}{t};X^*)}.
\\ & \lesssim \|t\mapsto g(tA)x\|_{L^p(\R_+,\frac{dt}{t};X)} \|x^*\|,
\end{align*}
where in the last step we applied the result we have already proved in the cotype case. The required estimate now follows since $R(A^*)\cap D(A^*)$ is dense in $X^*$.
\end{proof}

Let $r\in (1, \infty)$ and let $(\Omega, \Sigma, \mu)$ be a $\sigma$-finite measure space. An operator $A$ on $L^r(\Omega)$ will be called a {\em diffusion operator} if it is a sectorial operator of angle $<\pi/2$ and it satisfies the following properties
\begin{enumerate}
\item For every $t\geq 0$ and $x\in L^r(\Omega)$ with $f\geq 0$, $e^{-t A} x\geq 0$.
\item For all $t\geq 0$, $\|e^{-tA}\|\leq 1$.
\end{enumerate}
Now fix a Banach space $X$. By positivity each of the operators $e^{-tA}$ has a tensor extension to a bounded operator $\textbf{T}(t)$ on $L^r(\Omega;X)$ (see \cite[Theorem V.1.12]{GarRub}), and in this way $\textbf{T}$ is a strongly continuous semigroup on $L^r(\Omega;X)$. Let $\textbf{A}$ be the generator of $\textbf{T}$.

For details on UMD spaces we refer to \cite{Burk01, HNVW1, Rubio86}.
\begin{theorem}\label{thm:estimatingLrLqLP}
Let $X$ be a UMD space with type $p\in [1, 2]$ and cotype $q\in [2, \infty]$.
Let $A$ be a diffusion operator on $L^r(\Omega)$ with $r\in (1, \infty)$ fixed and let $\textbf{A}$ be its extension to $L^r(\Omega;X)$ as above.
Choose a nonzero $f\in H^{\infty}_0(\Sigma_{\phi})$ with $\phi>\pi/2$. Then for all $x\in L^r(\Omega;X)$ the following estimates hold
\begin{align*}
\|t\mapsto f(t\textbf{A})x\|_{L^r(\Omega;L^q(\R_+,\frac{dt}{t};X))} & \lesssim \|x\|_{L^r(\Omega;X)}\lesssim \|t\mapsto f(t\textbf{A})x\|_{L^r(\Omega;L^p(\R_+,\frac{dt}{t}))}.
\end{align*}
\end{theorem}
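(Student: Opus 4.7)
The plan is to combine pointwise-in-$\omega$ applications of Theorems \ref{thm:gammaanalytictype} and \ref{thm:gammaanalyticcotype} with three ingredients: (i) a bounded $H^\infty$-calculus for $\mathbf{A}$ on $L^r(\Omega;X)$, supplying the two-sided $\gamma$-square function estimates \eqref{eq:lowerHinfty}--\eqref{eq:upperHinfty}; (ii) the $\gamma$-Fubini identity
$$\gamma\bigl(\R_+, \tfrac{dt}{t}; L^r(\Omega;X)\bigr) \simeq L^r\bigl(\Omega; \gamma(\R_+, \tfrac{dt}{t}; X)\bigr),$$
valid whenever $X$ (hence $L^r(\Omega;X)$) has finite cotype; and (iii) a duality argument for the lower estimate that mirrors the proof of Proposition \ref{prop:generalPL}.

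The first step is to verify that $\mathbf{A}$ admits a bounded $H^\infty$-calculus of angle strictly less than $\pi/2$ on $L^r(\Omega;X)$. This transfers from $A$ to its tensor extension using the positivity and contractivity of the diffusion semigroup together with the UMD hypothesis on $X$, via known extension theorems in the spirit of Cowling, Hieber--Pr\"uss, and Duong--McIntosh. I expect this to be the principal external input and the main obstacle.

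Granting the calculus, fix $b>0$ so small that $f(e^{\pm ib}\,\cdot\,)\in H^\infty_0(\Sigma_{\phi-b})$ and the calculus remains bounded for $e^{\pm ib}\mathbf{A}$. Assuming first $q<\infty$, \eqref{eq:upperHinfty} yields
$$\sum_{j\in\{-1,1\}}\bigl\|t\mapsto f(e^{ijb}t\mathbf{A})x\bigr\|_{\gamma(\R_+,\frac{dt}{t};L^r(\Omega;X))}\lesssim \|x\|_{L^r(\Omega;X)}.$$
The $\gamma$-Fubini identity moves $L^r(\Omega)$ outside each $\gamma$-norm. For a.e.\ $\omega$, the map $z\mapsto(f(z\mathbf{A})x)(\omega)$ is holomorphic on a sector of aperture exceeding $b$ into $X$, so Theorem \ref{thm:gammaanalyticcotype}(3) applied pointwise in $\omega$ with $a=0$ yields
$$\Big(\int_0^\infty \|f(t\mathbf{A})x(\omega)\|_X^q\,\tfrac{dt}{t}\Big)^{1/q}\lesssim \sum_{j\in\{-1,1\}}\bigl\|t\mapsto f(e^{ijb}t\mathbf{A})x(\omega)\bigr\|_{\gamma(\R_+,\frac{dt}{t};X)}.$$
Raising to the $r$-th power, integrating over $\Omega$, and chaining the two displays delivers the first estimate of the theorem. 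The case $q=\infty$ is handled directly from $\sup_{t>0}\|f(t\mathbf{A})\|_{\calL(L^r(\Omega;X))}<\infty$.

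For the second inequality I use duality. Since UMD is self-dual and the $L^{r'}(\Omega)$-adjoint $A^*$ is again a diffusion operator, $\mathbf{A}^*$ on $L^{r'}(\Omega;X^*)$ has a bounded $H^\infty$-calculus by the same transference. Since $X^*$ has cotype $p'$, the inequality just proved, applied to $\mathbf{A}^*$ with exponent $p'$, gives
$$\bigl\|t\mapsto g(t\mathbf{A}^*)x^*\bigr\|_{L^{r'}(\Omega;L^{p'}(\R_+,\frac{dt}{t};X^*))}\lesssim \|x^*\|_{L^{r'}(\Omega;X^*)}$$
for any nonzero $g\in H^\infty_0(\Sigma_\phi)$. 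Choosing $g$ with $\int_0^\infty f(t)g(t)\frac{dt}{t}=1$ and using the Calder\'on reproducing identity
$$\langle x,x^*\rangle=\int_0^\infty\langle f(t\mathbf{A})x,g(t\mathbf{A}^*)x^*\rangle\frac{dt}{t}$$
on a dense subspace of $L^{r'}(\Omega;X^*)$ as in Proposition \ref{prop:generalPL}, a H\"older inequality in the $\frac{dt}{t}$-variable followed by a H\"older inequality in $\Omega$ yields
$$|\langle x,x^*\rangle|\lesssim \bigl\|t\mapsto f(t\mathbf{A})x\bigr\|_{L^r(\Omega;L^p(\R_+,\frac{dt}{t};X))}\,\|x^*\|_{L^{r'}(\Omega;X^*)},$$
and taking the supremum over $x^*$ in the unit ball concludes the proof. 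Once the transference of the $H^\infty$-calculus is in place, the remainder is a routine combination of the scalar embeddings, the $\gamma$-Fubini identity, and duality.
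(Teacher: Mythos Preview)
Your approach is essentially the paper's: transfer the $H^\infty$-calculus to $\mathbf{A}$, combine \eqref{eq:upperHinfty} with the $\gamma$-Fubini identity, apply Theorem~\ref{thm:gammaanalyticcotype}(3) pointwise in $\omega$, and handle the lower bound by the duality argument of Proposition~\ref{prop:generalPL}. Your use of the rotated functions $f(e^{\pm ib}\,\cdot\,)$ is in fact the precise way to feed the pointwise estimate, and matches what the paper's displayed chain implicitly requires.

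Two small corrections. First, the transference you invoke (and the one the paper uses, via \cite[Theorem~9.3]{KalW04}) only yields a bounded $H^\infty(\Sigma_\sigma)$-calculus for every $\sigma>\pi/2$, not for some $\sigma<\pi/2$; this weaker conclusion is all your argument actually needs, since $\phi>\pi/2$ allows $0<b<\phi-\pi/2$ so that $z\mapsto f(e^{\pm ib}z)\in H^\infty_0(\Sigma_{\phi-b})$ with $\phi-b>\pi/2$. Second, the claim that for a.e.\ $\omega$ the map $z\mapsto (f(z\mathbf{A})x)(\omega)$ is holomorphic is not automatic, because for each fixed $z$ you only have an $L^r$-equivalence class; the paper resolves this by invoking \cite[Theorem~1.1]{DHanalytic} to select a jointly measurable representative $\zeta(\omega,z)$ that is holomorphic in $z$ for every $\omega$, and you should make this selection step explicit.
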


\begin{proof}
By the first part of the proof of \cite[Theorem 9.3]{KalW04}, $\textbf{A}$ has a bounded $H^\infty$-calculus for every angle $\sigma>\pi/2$.
Now fix $x\in L^r(\Omega;X)$. Then the function $z\mapsto  f(zA)x$ is an holomorphic function on a sector $\Sigma_{\varepsilon}$ and by \cite[Proposition 2.6]{NVW07a} and \eqref{eq:upperHinfty},
\[\|t\mapsto f(t\textbf{A}) x\|_{L^r(\Omega;\gamma(\R_+,\frac{dt}{t};X))} \eqsim_r \|t\mapsto f(t\textbf{A})x\|_{\gamma(\R_+,\frac{dt}{t};L^r(\Omega;X))}\lesssim  \|x\|_{L^r(\Omega;X)}.\] By \cite[Theorem 1.1]{DHanalytic} there exists a strongly measurable function $\zeta:\Omega\times \Sigma_{\varepsilon}\to X$ such that $\zeta(\cdot, z) = f(z\textbf{A})x$ almost everywhere and for every $\omega\in \O$, $z\mapsto \zeta(\omega, z)$ is holomorphic. Hence applying Theorem \ref{thm:gammaanalyticcotype} pointwise in $\omega\in \O$ we find that
\begin{align*}
\|t\mapsto f(t\textbf{A})x\|_{L^r(\Omega;L^q(\R_+,\frac{dt}{t};X))} & = \|\zeta\|_{L^r(\Omega;L^q(\R_+,\frac{dt}{t};X))}
\lesssim  \|\zeta\|_{L^r(\Omega;\gamma(\R_+,\frac{dt}{t};X))} \\ & = \|t\mapsto f(t\textbf{A})x\|_{L^r(\Omega;\gamma(\R_+,\frac{dt}{t};X))}\lesssim \|x\|_{L^r(\Omega;X)}.
\end{align*}

For the other estimate one can argue similarly as in the proof of Proposition \ref{prop:generalPL}. Indeed, note that every UMD space is reflexive and thus has the Radon-Nikodym property and the dual of $L^r(\Omega;L^{p}(\R_+,\frac{dt}{t};X))$ is $L^{r'}(\Omega;L^{q'}(\R_+,\frac{dt}{t};X^*))$ (see \cite[Theorem IV.1]{DieUhl} or \cite{HNVW1}). Moreover, $X^*$ has cotype $p'$ and we can consider $\textbf{A}^*$ on $L^{r'}(\Omega;X^*)$. As before $\textbf{A}^*$ has an $H^\infty$-calculus when restricted to the closure of $D(\textbf{A}^*)\cap R(\textbf{A}^*)$.
\end{proof}

\begin{remark}
Assume the conditions of Theorem \ref{thm:estimatingLrLqLP} are satisfied.
One could apply Proposition \ref{prop:generalPL} directly to $\textbf{A}$ defined on the space $L^r(\Omega;X)$. As this space has type $r\wedge p$ and cotype $r\vee q$, this would yield
\[\|t\mapsto f(tA)x\|_{L^{q\vee r}(\R_+,\frac{dt}{t};L^r(\Omega;X))} \lesssim \|x\|_{L^r(\Omega;X)}\lesssim \|t\mapsto f(tA)x\|_{L^{p\wedge r}(\R_+,\frac{dt}{t};L^r(\Omega;X))}.\]
However, by Minkowski's inequality one sees that this estimate is a consequence of Theorem \ref{thm:estimatingLrLqLP} as well.
\end{remark}

As an immediate consequence we obtain a Littlewood-Paley-Stein estimate for the subordinated semigroups of $\textbf{A}$.
Recall that $\textbf{A}$ is sectorial of any angle $>\pi/2$. Therefore, for $\alpha\in (0,1)$, $\textbf{A}^{\alpha}$ is sectorial of any angle $<\pi\alpha/2$. Let $\textbf{T}_{\alpha}(t) = e^{-t \textbf{A}^{\alpha}}$ for $t\geq 0$. Note that for $\alpha =  \frac12$ and $t\geq 0$,
\[\textbf{T}_{\frac12}(t)x = \frac{1}{\pi} \int_0^\infty \frac{e^{-u}}{\sqrt{u}} \textbf{T}(\tfrac{t}{4u})x \, dt\]
is the abstract Poisson semigroup (see \cite{MTX06} and \cite[Example 3.4.6]{Haase:2}) associated to $\textbf{A}$. Define the function $g_{\alpha}$ by
\[g_{\alpha}(z) = z^{\alpha} e^{-z^{\alpha}}.\]
Then $g$ is in $H^{\infty}_0(\Sigma_{\phi})$ for every $\phi<\pi/(2\alpha)$.
Moreover, $g_{\alpha}(t\textbf{A})x = t^{\alpha} \frac{d}{dt}\textbf{T}_{\alpha}(t)x$. Thus Theorem \ref{thm:estimatingLrLqLP} implies the following:
\begin{corollary}\label{cor:Talpha}
Assume the conditions of Theorem \ref{thm:estimatingLrLqLP} and let $\alpha\in (0,1)$. Then the following estimate holds for every $x\in  L^r(\Omega;X)$:
\begin{align*}
\|t\mapsto g_{\alpha}(t\textbf{A})x\|_{L^r(\Omega;L^q(\R_+,\frac{dt}{t};X))} & \lesssim \|x\|_{L^r(\Omega;X)}\lesssim \|t\mapsto g_{\alpha}(t\textbf{A})x\|_{L^r(\Omega;L^p(\R_+,\frac{dt}{t}))}.
\end{align*}
\end{corollary}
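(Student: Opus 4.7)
The plan is to derive Corollary \ref{cor:Talpha} as a direct specialization of Theorem \ref{thm:estimatingLrLqLP}, by simply checking that the function $g_\alpha(z) = z^\alpha e^{-z^\alpha}$ is a legitimate choice for the function $f$ appearing there. Thus the only real content beyond invoking the theorem is to verify that $g_\alpha \in H^\infty_0(\Sigma_\phi)$ for some $\phi > \pi/2$.

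First I would record the elementary geometric observation that $z\mapsto z^\alpha$ maps $\Sigma_\phi$ biholomorphically onto $\Sigma_{\alpha\phi}$. Hence for any $\phi < \pi/(2\alpha)$ we have $\alpha\phi < \pi/2$, so every $z\in \Sigma_\phi$ satisfies
\[
\mathrm{Re}(z^\alpha) \ge |z|^\alpha \cos(\alpha\phi) =: c\, |z|^\alpha,\qquad c>0,
\]
which gives the pointwise bound $|g_\alpha(z)| \le |z|^\alpha e^{-c|z|^\alpha}$. This shows that $g_\alpha$ is bounded on $\Sigma_\phi$ and, by comparing the behaviour near $0$ (where $|g_\alpha(z)|\lesssim |z|^\alpha$) and near $\infty$ (where $|g_\alpha(z)|$ decays super-polynomially), that $g_\alpha$ satisfies the decay estimate $|g_\alpha(z)| \lesssim |z|^\alpha/(1+|z|^{2\alpha})$ required for membership in $H^\infty_0(\Sigma_\phi)$. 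Since $\alpha \in (0,1)$ implies $\pi/(2\alpha) > \pi/2$, we may fix some $\phi \in (\pi/2,\,\pi/(2\alpha))$; this is precisely the range covered by the hypothesis ``$\phi > \pi/2$'' of Theorem \ref{thm:estimatingLrLqLP}.

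With $f := g_\alpha$ so chosen, Theorem \ref{thm:estimatingLrLqLP} applied to the diffusion operator $A$ on $L^r(\Omega)$ yields
\[
\|t\mapsto g_\alpha(t\textbf{A})x\|_{L^r(\Omega;L^q(\R_+,\frac{dt}{t};X))} \lesssim \|x\|_{L^r(\Omega;X)} \lesssim \|t\mapsto g_\alpha(t\textbf{A})x\|_{L^r(\Omega;L^p(\R_+,\frac{dt}{t};X))},
\]
which is exactly the asserted inequality. The identity $g_\alpha(t\textbf{A})x = t^\alpha \frac{d}{dt}\textbf{T}_\alpha(t)x$ is not needed in the argument itself; it serves only to identify $g_\alpha(t\textbf{A})$ with the natural Littlewood--Paley--Stein expression associated to the subordinated semigroup $\textbf{T}_\alpha$. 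There is no genuine obstacle here --- the only thing to watch is the interplay between $\alpha$, the angle of $\Sigma_\phi$, and the hypothesis $\phi > \pi/2$ of Theorem \ref{thm:estimatingLrLqLP}, which is exactly what $\alpha<1$ guarantees.
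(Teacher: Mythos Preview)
Your proposal is correct and follows exactly the paper's approach: the paper simply observes (in the paragraph preceding the corollary) that $g_\alpha\in H^\infty_0(\Sigma_\phi)$ for every $\phi<\pi/(2\alpha)$, so in particular for some $\phi>\pi/2$ when $\alpha\in(0,1)$, and then invokes Theorem~\ref{thm:estimatingLrLqLP} with $f=g_\alpha$. You have merely supplied the routine details of the $H^\infty_0$ verification that the paper leaves implicit.
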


\begin{remark}\
\begin{enumerate}[(i)]
\item The above estimate for $\alpha = 1/2$ was proved for martingale type $p$ cotype $q$ spaces in \cite{MTX06} under the additional conditions that $A$ is selfadjoint and $T(t)$ is a contraction on $L^r(\Omega)$ for all $r\in [1, \infty]$ and $T(t) 1 = 1$ for every $t\geq 0$.
\item Under the additional assumption that $X$ is a complex interpolation space of a Hilbert space and a UMD Banach space, one can prove the assertion of Theorem \ref{thm:estimatingLrLqLP} for every function $f\in H^{\infty}_0(\Sigma_{\phi})$,
    where $\phi<\pi/2$ depends on $X$ and $A$ (see \cite[Theorem 9.7]{Hyt07} and \cite[Theorem 9.3]{KalW04}).
\item If $A$ is not injective, then one can still prove results such as Theorem \ref{thm:estimatingLrLqLP} and Corollary \ref{cor:Talpha}. Indeed, the reflexive space $L^r(\Omega;X)$ is a direct sum of the kernel of $A$ and $\overline{R(A)}$ (see \cite[Appendix A]{KunWeis04}) and the restriction of $A$ to $\overline{R(A)}$ satisfies the required conditions. The estimates of Theorem \ref{thm:estimatingLrLqLP} and Corollary \ref{cor:Talpha} will now hold with $\|x\|_{L^r(\Omega;X)}$ replaced by $\|x-Px\|_{L^r(\Omega;X)}$, where $P$ is the projection onto the kernel of $A$.
\end{enumerate}
\end{remark}

\subsection{Embedding of interpolation spaces}

In this final section we consider embeddings of the form
\begin{align}\label{eq:embedPeetre}
(X_0, X_1)_{\theta,p} \hookrightarrow [X_0, X_1]_{\theta} \hookrightarrow (X_0, X_1)_{\theta,q},
\end{align}
where $1\leq p\leq q\leq \infty$ and $\theta\in (0,1)$. Here $(\cdot, \cdot)_{\theta, r}$ denotes the real interpolation space with parameter $r$ and $[\cdot, \cdot]_{\theta}$ denotes the complex interpolation space. We refer to  \cite{BerLof} and \cite{Tri} for a detailed treatment of the subject. The embedding \eqref{eq:embedPeetre} always holds for $p=1$ and $q=\infty$ (see \cite{Tri}). Under the assumption that $X_j$ have Fourier type $p_j\in [1, 2]$ for $j=0, 1$, in \cite{Peetre69} Peetre has improved the embedding \eqref{eq:embedPeetre} to $p\in [1, 2]$  satisfying $\frac{1-\theta}{p_0} + \frac{\theta}{p_1} = \frac{1}{p}$ and $q = p'$ (also see \cite{HNVW1}).

Recall the following facts:
\begin{itemize}
\item every space has Fourier type $1$.
\item a Hilbert space has Fourier type $2$.
\item Fourier type $p$ implies Fourier type $q$ if $1\leq q\leq p\leq 2$.
\item $L^r$ has Fourier type $\min\{r, r'\}$.
\item Fourier type $p$ implies type $p$ and cotype $p'$.
\end{itemize}
The result of Peetre is optimal in the sense that one cannot take a better value of $p$ and $q$ in general. We will improve Peetre's result in the case $X_1 = D(A)$, where $A$ is a certain sectorial operator on $X$.

Recall that for $\theta\in (0,1)$ and $p\in [1, \infty]$ an equivalent norm on $(X,D(A))_{\theta, p}$ is given by (see \cite[Theorem 1.14.3]{Tri})
\begin{equation}\label{eq:interpspaces}
\begin{aligned}
\|x\|_{(X,D(A))_{\theta, p}}& \eqsim \|x\|+\|t\mapsto t^{-\theta} w(tA)\|_{L^p(\R_+,\frac{dt}{t};X)},
\end{aligned}
\end{equation}
where $w(z) = z(1+z)^{-1}$.
In the case $A$ is invertible then term $\|x\|$ can be omitted from the above expressions.

\begin{theorem}\label{thm:complexreal}
Let $X$ be a Banach space with type $p\in [1, 2]$ and cotype $q\in [2, \infty]$. Assume $A$ has a bounded $H^\infty$-calculus of some angle $\sigma\in (0,\pi)$. Then for all $\theta\in (0,1)$,
\begin{align*}
(X, D(A))_{\theta,p} & \hookrightarrow  D(A^{\theta}) \hookrightarrow  (X, D(A))_{\theta,q},
\\ (X, D(A))_{\theta,p} & \hookrightarrow  [X,D(A)]_{\theta}  \hookrightarrow (X, D(A))_{\theta,q}.
\end{align*}
\end{theorem}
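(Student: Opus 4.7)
The strategy is to establish the chain $(X,D(A))_{\theta,p} \hookrightarrow D(A^\theta) \hookrightarrow (X,D(A))_{\theta,q}$ first, and then deduce the second chain from it via the classical identification $[X,D(A)]_\theta = D(A^\theta)$ with equivalent norms, which is valid whenever $A$ has bounded imaginary powers (see \cite{Tri,Haase:2}); the bounded $H^\infty$-calculus of $A$ automatically supplies BIP. Since the cases $p=1$ and $q=\infty$ are classical, we restrict to $p\in(1,2]$ and $q\in[2,\infty)$, so that $X$ has finite cotype. If $A$ fails to be injective, we use the standard decomposition $X = \ker(A) \oplus \overline{R(A)}$ and restrict attention to $\overline{R(A)}$.

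The central object is the $X$-valued holomorphic function
\[
F(z) := z^{-\theta}\, w(zA) x = z^{1-\theta} A(1+zA)^{-1} x, \qquad w(z) = z/(1+z),
\]
defined on the sector $\Sigma_{\pi-\sigma}$. The plan is to exploit the fact that the norms of $F$ along the rays $\{e^{ijb}t : t>0\}$, $j\in\{-1,1\}$, simultaneously encode both the real interpolation norm (when measured in $L^r$) and the $D(A^\theta)$-norm (when measured in $\gamma$). The rotated operator $B_j := e^{ijb}A$ is sectorial with bounded $H^\infty$-calculus of angle $\sigma+|b|$, shares the same domain $D(A)$, and induces the same real interpolation spaces, so formula \eqref{eq:interpspaces} applied to $B_j$ yields
\[
\|F(e^{ijb}\cdot)\|_{L^r(\R_+, \tfrac{dt}{t}; X)} + \|x\| \;\eqsim\; \|x\|_{(X,D(A))_{\theta,r}}, \qquad r\in[1,\infty].
\]
Meanwhile, factoring $F(e^{ijb}t) = \varphi(tB_j)\, A^\theta x$ with $\varphi(w) = w^{1-\theta}(1+w)^{-1} \in H^\infty_0(\Sigma_{\pi})$, equations \eqref{eq:lowerHinfty}--\eqref{eq:upperHinfty} applied to $B_j$ (using finite cotype for the upper bound) give
\[
\|F(e^{ijb}\cdot)\|_{\gamma(\R_+, \tfrac{dt}{t}; X)} \;\eqsim\; \|A^\theta x\|, \qquad x\in D(A^\theta).
\]

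With these two identifications in hand the two embeddings are essentially immediate. For $D(A^\theta) \hookrightarrow (X,D(A))_{\theta,q}$, Theorem \ref{thm:gammaanalyticcotype}(3) with $a=0$ applied to $F$ gives
\[
\|F\|_{L^q(\R_+,\tfrac{dt}{t};X)} \lesssim \sum_{j \in \{-1,1\}} \|F(e^{ijb}\cdot)\|_{\gamma(\R_+,\tfrac{dt}{t};X)} \eqsim \|A^\theta x\|,
\]
hence $\|x\|_{(X,D(A))_{\theta,q}} \lesssim \|x\|_{D(A^\theta)}$. Dually, for $(X,D(A))_{\theta,p} \hookrightarrow D(A^\theta)$, Theorem \ref{thm:gammaanalytictype}(3) with $a=0$ yields
\[
\|A^\theta x\| \eqsim \|F\|_{\gamma(\R_+,\tfrac{dt}{t};X)} \lesssim \sum_{j} \|F(e^{ijb}\cdot)\|_{L^p(\R_+,\tfrac{dt}{t};X)} \lesssim \|x\|_{(X,D(A))_{\theta,p}}
\]
for $x$ in the dense subspace $D(A) \cap R(A)$, and one then extends to all $x \in (X,D(A))_{\theta,p}$ by approximating via $x_n := n^2 A (n+A)^{-1} (1+nA)^{-1} x \in D(A) \cap R(A)$ and invoking the closedness of $A^\theta$. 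The main obstacle is precisely this last density/closure step: one has to verify that the constants in the equivalence for $B_j$ are uniform in $b$ over a suitable small interval, that the $L^p$-norms of $F_{x_n}(e^{ijb}\cdot)$ converge to those of $F_x(e^{ijb}\cdot)$, and that $\|A^\theta x_n\|$ is then bounded uniformly so the closedness argument can pass to the limit.
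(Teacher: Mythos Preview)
Your argument is correct and essentially parallel to the paper's, but with one genuine difference worth noting. The paper does not apply Theorems~\ref{thm:gammaanalytictype} and~\ref{thm:gammaanalyticcotype} directly; instead it invokes Proposition~\ref{prop:generalPL}, which already packages the Littlewood--Paley estimate $\|v(tA)y\|_{L^q}\lesssim\|y\|\lesssim\|v(tA)y\|_{L^p}$ for $y=A^\theta x$ and $v(z)=z^{1-\theta}(1+z)^{-1}$. In particular, for the type embedding the paper reaches $\|y\|\lesssim\|v(tA)y\|_{L^p}$ by a \emph{duality} argument (passing to $A^*$ on $X^*$, which has cotype $p'$), whereas you obtain it directly from Theorem~\ref{thm:gammaanalytictype}(3) by identifying the $L^p$-norms of $F$ along the boundary rays $e^{\pm ib}\R_+$ with the real interpolation norm via the rotated operators $B_j=e^{ijb}A$. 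Your route avoids duality and the Radon--Nikod\'ym/reflexivity issues that come with it, at the cost of the extra bookkeeping with $B_j$ (checking $(X,D(B_j))_{\theta,p}=(X,D(A))_{\theta,p}$ and the factorisation $F(e^{ijb}t)=\varphi(tB_j)A^\theta x$, both routine).

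Your stated ``main obstacle'' is overstated: the density step is handled in the paper in one line by working with $x\in D(A)$ and invoking the density of $D(A)$ in $(X,D(A))_{\theta,p}$ and $D(A^\theta)$. Your approximants $x_n=n^2A(n+A)^{-1}(1+nA)^{-1}x$ are the standard sectorial approximations and do the job; the uniformity-in-$b$ of the constants is automatic once $b$ is fixed with $0<b<\pi-\sigma$, since only finitely many $B_j$ are involved. Also note that, as in the paper, you may reduce at the outset to invertible $A$ by replacing $A$ with $A+1$, which simplifies the real interpolation formula and makes the closure argument cleaner.
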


If $A$ has a bounded $H^\infty$-calculus it also has bounded imaginary powers, and therefore $D(A^{\theta}) = [X, D(A)]_{\theta}$ (see \cite[6.6.9]{Haase:2} and \cite[1.15.3]{Tri}). Theorem \ref{thm:complexreal} proves \eqref{eq:embedPeetre} under type and cotype assumptions which in this special but important case improves the result of Peetre (see Example \ref{ex:Besovemdtype}).

We now turn to the proof of Theorem \ref{thm:complexreal}.
Replacing $A$ by $A+1$ if necessary, we may assume that $A$ is invertible and $\|(\lambda-A)^{-1}\|\leq M(1+|\lambda|)^{-1}$ for all $\lambda\notin \Sigma_{\sigma}$. This does not influence the interpolation spaces and fractional domain spaces.

\begin{proof}
As we already noticed that $D(A^{\theta}) = [X, D(A)]_{\theta}$, it suffices to prove the embedding for $D(A^{\theta})$. We first make a general observation. Let $x\in D(A)$. Let $v(z) = z^{1-\theta} (1+z)^{-1}$. Then $v(zA)y = z^{-\theta} w(zA) x$, where $y = A^{\theta} x$ and $w(z) = z(1+z)^{-1}$ is as before. Observe that for $r\in [1, \infty)$, \eqref{eq:interpspaces} yields
\begin{align*}
\|v(tA)y\|_{L^r(\R_+,\frac{dt}{t};X)} \eqsim \|x\|_{D_A(\theta, r)}.
\end{align*}
To prove the assertion of the theorem it suffices to consider $q<\infty$ as in the case $q=\infty$ the result is a special case of \cite[1.15.2]{Tri}. In the remaining cases, by density it suffices to show $\|x\|_{D_A(\theta, q)}\lesssim \|y\|\leq \|x\|_{D_A(\theta, p)}$ for $x\in D(A)$ and $y = A^{\theta}x$ (see \cite[1.6.2 and 1.15]{Tri}).

Combining the above observation with Proposition \ref{prop:generalPL}  we find
\begin{align*}
\|x\|_{D_A(\theta, q)} \eqsim \|v(tA)y\|_{L^q(\R_+,\frac{dt}{t};X)} \lesssim \|y\| \lesssim \|v(tA)y\|_{L^p(\R_+,\frac{dt}{t};X)} \eqsim \|x\|_{D_A(\theta, p)}.
\end{align*}
\end{proof}

As an illustration we apply Theorem \ref{thm:complexreal} to the case of Sobolev spaces and compare the obtained embedding with the result one would get if Fourier type is used instead.
\begin{example}\label{ex:Besovemdtype}
Let $X = L^r(\R^d)$ with $r\in (1, \infty)$ and $A = \Delta$ with $D(A) = W^{2,r}(\R^d)$. Then $A$ has a bounded $H^\infty$-calculus. Fix $\theta\in (0,1)$. It follows from \cite[2.4.2]{Tri} that $D(A^{\theta}) = [X, D(A)]_{\theta} =  H^{r,2\theta}(\R^d)$ and $(X, D(A))_{\theta,p} = B^{2\theta}_{r,p}(\R^d)$.
\begin{enumerate}[(i)]
\item In the case $r\in [2, \infty)$, $X$ has type $2$ and cotype $r$ and Theorem \ref{thm:complexreal}  yields
\begin{equation}\label{eq:Besov1}
B^{2\theta}_{r,2}(\R^d)\hookrightarrow H^{r,2\theta}(\R^d) \hookrightarrow B^{2\theta}_{r,r}(\R^d).
\end{equation}
It is known that the microscopic coefficients $2$ and $r$ cannot be improved.
\item In the case $r\in (1,2]$, $X$ has type $r$ and cotype $2$ and Theorem \ref{thm:complexreal}  yields
\begin{equation}\label{eq:Besov2}
B^{2\theta}_{r,r}(\R^d)\hookrightarrow H^{r,2\theta}(\R^d) \hookrightarrow B^{2\theta}_{r,2}(\R^d).
\end{equation}
Also in this case it is known that the microscopic parameters $r$ and $2$ cannot be improved.
\end{enumerate}
If instead one uses Fourier type, one only obtains $B^{2\theta}_{r,r'}(\R^d)\hookrightarrow H^{r,2\theta}(\R^d)$ on the left-hand side of \eqref{eq:Besov1} and $H^{r,2\theta}(\R^d) \hookrightarrow B^{2\theta}_{r,r'}(\R^d)$ on the right-hand side of \eqref{eq:Besov2}.
\end{example}

The results of this section lead to the following natural question:
\begin{problem}\label{probl:complex}
Given an interpolation couple $(X_0,X_1)$ and $p\in (1, 2]$ and $q\in [2, \infty)$.
Prove or disprove the following:
\begin{enumerate}[(i)]
\item If $X_0$ and $X_1$ both have type $p\in (1, 2]$, then $(X_0, X_1)_{\theta,p}\hookrightarrow [X_0, X_1]_{\theta}$.
\item If $X_0$ and $X_1$ both have cotype $q\in [2, \infty)$, then $[X_0, X_1]_{\theta}\hookrightarrow (X_0, X_1)_{\theta,q}$.
\end{enumerate}
\end{problem}
More generally, one can ask for the same result if instead $X_j$ has type $p_j$ and $\frac{1}{p} = \frac{1-\theta}{p_0} + \frac{\theta}{p_1}$ (and similarly in the cotype case).

\begin{remark}
Replacing the complex interpolation method (see \cite{KKW,SuWe06}) by the so-called Rademacher interpolation method $\lb \cdot, \cdot\rb_{\theta}$, one can prove the embedding in Problem~\ref{probl:complex} if $X_j$ has type $p_j$ and cotype $q_j$ and $\frac{1-\theta}{p_0} + \frac{\theta}{p_1} = \frac{1}{p}$ and $\frac{1-\theta}{q_0} + \frac{\theta}{q_1} = \frac{1}{q}$.
The Rademacher interpolation method differs from the complex method in general. Indeed, for an almost $R$-sectorial operator $A$ it is known that if $D(A^{\theta}) = \lb X, D(A)\rb_{\theta}$ and $X$ is $B$-convex (nontrivial type), then $A$ has an $H^\infty$-calculus (see \cite[Corollary 7.7]{KKW}). Since there exists an almost $R$-sectorial operator $A$ on $L^p(\R)$ with bounded imaginary powers but without a bounded $H^\infty$-calculus (see \cite[Example 10.17]{KunWeis04}), it follows that for this operator
\[\lb X, D(A)\rb_{\theta} \neq D(A^{\theta}) = [X, D(A)]_{\theta},\]
where the last identity follows from \cite[1.15.3]{Tri}.

On the other hand, in the case $A$ has an $H^\infty$-calculus the Rademacher interpolation and complex method indeed coincide (see \cite[Theorem~7.4]{KKW}), and therefore, Theorem \ref{thm:complexreal} can alternatively be derived from \cite[Theorem~6.1]{SuWe06}.
\end{remark}

\def\cprime{$'$}


\begin{thebibliography}{10}

\bibitem{Am}
H.~Amann.
\newblock {\em Linear and quasilinear parabolic problems. {V}ol. {I},
  {A}bstract linear theory}, volume~89 of {\em Monographs in Mathematics}.
\newblock Birkh\"auser Boston Inc., Boston, MA, 1995.

\bibitem{ABHN}
W.~Arendt, C.~J.~K. Batty, M.~Hieber, and F.~Neubrander.
\newblock {\em {V}ector-valued {L}aplace {T}ransforms and {C}auchy problems},
  volume~96 of {\em Monographs in Mathematics}.
\newblock Birkh\"auser Verlag, Basel, 2001.

\bibitem{AFM}
C.~Arhancet, S.~Fackler, and C.~Le~Merdy.
\newblock {Isometric dilations and $H^{\infty}$ calculus for bounded analytic
  semigroups and Ritt operators}.
\newblock {\em arXiv preprint arXiv:1504.00471}, 2015.

\bibitem{BerLof}
J.~Bergh and J.~L{\"o}fstr{\"o}m.
\newblock {\em Interpolation spaces. {A}n introduction}.
\newblock Springer-Verlag, Berlin, 1976.
\newblock Grundlehren der Mathematischen Wissenschaften, No. 223.

\bibitem{BCCFRM12}
J.J. Betancor, A.J. Castro, J.~Curbelo, J.~C. Fari{\~n}a, and
  L.~Rodr{\'{\i}}guez-Mesa.
\newblock {$\gamma$}-radonifying operators and {UMD}-valued
  {L}ittlewood-{P}aley-{S}tein functions in the {H}ermite setting on {BMO} and
  {H}ardy spaces.
\newblock {\em J. Funct. Anal.}, 263(12):3804--3856, 2012.

\bibitem{betancor2012umd}
J.J. Betancor, A.J. Castro, J.C. Fari{\~n}a, and L~Rodr{\'\i}guez-Mesa.
\newblock {UMD Banach spaces and square functions associated with heat
  semigroups for Schr\"odinger and Laguerre operators}.
\newblock {\em arXiv preprint arXiv:1209.4482}, 2012.

\bibitem{BFRMST11}
J.J. Betancor, J.C. Fari{\~n}a, L.~Rodr{\'{\i}}guez-Mesa, A.~Sanabria, and J.L.
  Torrea.
\newblock Lusin type and cotype for {L}aguerre {$g$}-functions.
\newblock {\em Israel J. Math.}, 182:1--30, 2011.

\bibitem{Burk01}
D.~L. Burkholder.
\newblock Martingales and singular integrals in {B}anach spaces.
\newblock In {\em Handbook of the geometry of {B}anach spaces, {V}ol. {I}},
  pages 233--269. North-Holland, Amsterdam, 2001.

\bibitem{CDMY96}
M.~Cowling, I.~Doust, A.~McIntosh, and A.~Yagi.
\newblock Banach space operators with a bounded {$H\sp \infty$} functional
  calculus.
\newblock {\em J. Austral. Math. Soc. Ser. A}, 60(1):51--89, 1996.

\bibitem{DHanalytic}
W.~Desch and K.W. Homan.
\newblock Pointwise versions of solutions to {C}auchy problems in
  {$L^p$}-spaces.
\newblock {\em Rend. Istit. Mat. Univ. Trieste}, 34(1-2):121--142 (2003), 2002.

\bibitem{DJT}
J.~Diestel, H.~Jarchow, and A.~Tonge.
\newblock {\em {A}bsolutely {S}umming {O}perators}, volume~43 of {\em Cambridge
  Studies in Advanced Mathematics}.
\newblock Cambridge University Press, Cambridge, 1995.

\bibitem{DieUhl}
J.~Diestel and J.J. Uhl, Jr.
\newblock {\em {V}ector {M}easures}.
\newblock American Mathematical Society, Providence, R.I., 1977.

\bibitem{FigTom}
T.~Figiel and N.~Tomczak-Jaegermann.
\newblock Projections onto {H}ilbertian subspaces of {B}anach spaces.
\newblock {\em Israel J. Math.}, 33(2):155--171, 1979.

\bibitem{GarRub}
J.~Garc{\'{\i}}a-Cuerva and J.~L. Rubio~de Francia.
\newblock {\em Weighted norm inequalities and related topics}, volume 116 of
  {\em North-Holland Mathematics Studies}.
\newblock North-Holland Publishing Co., Amsterdam, 1985.

\bibitem{HaHa}
B.H. Haak and M.~Haase.
\newblock Square function estimates and functional calculi.
\newblock {\em arXiv preprint arXiv:1311.0453}, 2013.

\bibitem{HaakKunst06}
B.H. Haak and P.C. Kunstmann.
\newblock Admissibility of unbounded operators and wellposedness of linear
  systems in {B}anach spaces.
\newblock {\em Integral Equations Operator Theory}, 55(4):497--533, 2006.

\bibitem{Haase:2}
M.H.A. Haase.
\newblock {\em The functional calculus for sectorial operators}, volume 169 of
  {\em Operator Theory: Advances and Applications}.
\newblock Birkh\"auser Verlag, Basel, 2006.

\bibitem{Hyt07}
T.P. Hyt{\"o}nen.
\newblock Littlewood-{P}aley-{S}tein theory for semigroups in {UMD} spaces.
\newblock {\em Rev. Mat. Iberoam.}, 23(3):973--1009, 2007.

\bibitem{HNP:tent}
T.P. Hyt{\"o}nen, J.M.A.M.~van Neerven, and P.~Portal.
\newblock Conical square function estimates in {UMD} {B}anach spaces and
  applications to {$H^\infty$}-functional calculi.
\newblock {\em J. Anal. Math.}, 106:317--351, 2008.

\bibitem{HNVW1}
T.P. Hyt\"onen, J.M.A.M.~van Neerven, M.C. Veraar, and L.W. Weis.
\newblock {Analysis in Banach spaces. Volume I: Martingales and
  Littlewood-Paley theory}.
\newblock In preparation, 2015.

\bibitem{HytWeBMO}
T.P. Hyt{\"o}nen and L.W. Weis.
\newblock The {B}anach space-valued {BMO}, {C}arleson's condition, and
  paraproducts.
\newblock {\em J. Fourier Anal. Appl.}, 16(4):495--513, 2010.

\bibitem{KaiWei08}
C.~Kaiser and L.~Weis.
\newblock Wavelet transform for functions with values in {UMD} spaces.
\newblock {\em Studia Math.}, 186(2):101--126, 2008.

\bibitem{Kal}
O.~Kallenberg.
\newblock {\em {F}oundations of {M}odern {P}robability}.
\newblock Probability and its Applications (New York). Springer-Verlag, New
  York, second edition, 2002.

\bibitem{KKW}
N.J. Kalton, P.C. Kunstmann, and L.W. Weis.
\newblock Perturbation and interpolation theorems for the {$H^\infty$}-calculus
  with applications to differential operators.
\newblock {\em Math. Ann.}, 336(4):747--801, 2006.

\bibitem{KNVW}
N.J. Kalton, J.M.A.M.~van Neerven, M.C. Veraar, and L.W. Weis.
\newblock Embedding vector-valued {B}esov spaces into spaces of
  $\gamma$-radonifying operators.
\newblock {\em Math. Nachr.}, 281:238--252, 2008.

\bibitem{KalW04}
N.J. Kalton and L.W. Weis.
\newblock {The $H^{\infty}$-calculus and square function estimates}.
\newblock arXiv preprint arXiv:1411.0472, 2014.

\bibitem{KunWeis04}
P.C. Kunstmann and L.W. Weis.
\newblock Maximal {$L\sb p$}-regularity for parabolic equations, {F}ourier
  multiplier theorems and {$H\sp \infty$}-functional calculus.
\newblock In {\em Functional analytic methods for evolution equations}, volume
  1855 of {\em Lecture Notes in Math.}, pages 65--311. Springer, Berlin, 2004.

\bibitem{Kwap72}
S.~Kwapie{\'n}.
\newblock Isomorphic characterizations of inner product spaces by orthogonal
  series with vector valued coefficients.
\newblock {\em Studia Math.}, 44:583--595, 1972.
\newblock Collection of articles honoring the completion by Antoni Zygmund of
  50 years of scientific activity, VI.

\bibitem{LeM04}
C.~Le~Merdy.
\newblock On square functions associated to sectorial operators.
\newblock {\em Bull. Soc. Math. France}, 132(1):137--156, 2004.

\bibitem{LeM14}
C.~Le~Merdy.
\newblock {$H^\infty$} functional calculus and square function estimates for
  {R}itt operators.
\newblock {\em Rev. Mat. Iberoam.}, 30(4):1149--1190, 2014.

\bibitem{MTX06}
T.~Mart{\'{\i}}nez, J.L. Torrea, and Q.~Xu.
\newblock Vector-valued {L}ittlewood-{P}aley-{S}tein theory for semigroups.
\newblock {\em Adv. Math.}, 203(2):430--475, 2006.

\bibitem{Nee10}
J.M.A.M.~van Neerven.
\newblock {$\gamma$}-radonifying operators---a survey.
\newblock In {\em The {AMSI}-{ANU} {W}orkshop on {S}pectral {T}heory and
  {H}armonic {A}nalysis}, volume~44 of {\em Proc. Centre Math. Appl. Austral.
  Nat. Univ.}, pages 1--61. Austral. Nat. Univ., Canberra, 2010.

\bibitem{NVW07a}
J.M.A.M.~van Neerven, M.C. Veraar, and L.W. Weis.
\newblock Stochastic integration in {UMD B}anach spaces.
\newblock {\em Annals Probab.}, 35:1438--1478, 2007.

\bibitem{NVWgamma}
J.M.A.M.~van Neerven, M.C. Veraar, and L.W. Weis.
\newblock Maximal {$\gamma$}-regularity.
\newblock {\em J. Evol. Equ.}, 15(2):361--402, 2015.

\bibitem{NeeWei05a}
J.M.A.M.~van Neerven and L.W. Weis.
\newblock Stochastic integration of functions with values in a {B}anach space.
\newblock {\em Studia Math.}, 166(2):131--170, 2005.

\bibitem{Peetre69}
J.~Peetre.
\newblock Sur la transformation de {F}ourier des fonctions \`a valeurs
  vectorielles.
\newblock {\em Rend. Sem. Mat. Univ. Padova}, 42:15--26, 1969.

\bibitem{PisConv}
G.~Pisier.
\newblock {\em The volume of convex bodies and {B}anach space geometry},
  volume~94 of {\em Cambridge Tracts in Mathematics}.
\newblock Cambridge University Press, Cambridge, 1989.

\bibitem{RadH}
H.~Rademacher.
\newblock {\em Topics in analytic number theory}.
\newblock Springer-Verlag, New York-Heidelberg, 1973.
\newblock Edited by E. Grosswald, J. Lehner and M. Newman, Die Grundlehren der
  mathematischen Wissenschaften, Band 169.

\bibitem{Rubio86}
J.~L. Rubio~de Francia.
\newblock Martingale and integral transforms of {B}anach space valued
  functions.
\newblock In {\em Probability and {B}anach spaces ({Z}aragoza, 1985)}, volume
  1221 of {\em Lecture Notes in Math.}, pages 195--222. Springer, Berlin, 1986.

\bibitem{Stein:topics}
E.~M. Stein.
\newblock {\em Topics in harmonic analysis related to the {L}ittlewood-{P}aley
  theory.}
\newblock Annals of Mathematics Studies, No. 63. Princeton University Press,
  Princeton, N. J., 1970.

\bibitem{SuWe06}
J.~Su{\'a}rez and L.~Weis.
\newblock Interpolation of {B}anach spaces by the {$\gamma$}-method.
\newblock In {\em Methods in {B}anach space theory}, volume 337 of {\em London
  Math. Soc. Lecture Note Ser.}, pages 293--306. Cambridge Univ. Press,
  Cambridge, 2006.

\bibitem{Tri}
H.~Triebel.
\newblock {\em {I}nterpolation {T}heory, {F}unction {S}paces, {D}ifferential
  {O}perators}, volume~18 of {\em North-Holland Mathematical Library}.
\newblock North-Holland Publishing Co., Amsterdam, 1978.

\bibitem{Xu98}
Q.~Xu.
\newblock Littlewood-{P}aley theory for functions with values in uniformly
  convex spaces.
\newblock {\em J. Reine Angew. Math.}, 504:195--226, 1998.

\end{thebibliography}
\end{document}